\documentclass[%
 aip,
 amsmath,amssymb,
 reprint,%
]{revtex4-1}

\usepackage{graphicx}% Include figure files
\usepackage{dcolumn}% Align table columns on decimal point
\usepackage{bm}% bold math
\usepackage[utf8]{inputenc}
\usepackage[T1]{fontenc}
\usepackage{mathptmx}
\usepackage{etoolbox}
\usepackage{xcolor}

\newtheorem{theorem}{Theorem}

\newtheorem{definition}[theorem]{Definition}
\newtheorem{example}[theorem]{Example}

\newtheorem{proposition}[theorem]{Proposition}
\newtheorem{remark}[theorem]{Remark}

\newenvironment{proof}[1][Proof]{\noindent\textbf{#1.} }{\ \rule{0.5em}{0.5em}}
\makeatletter
\def\@email#1#2{%
 \endgroup
 \patchcmd{\titleblock@produce}
  {\frontmatter@RRAPformat}
  {\frontmatter@RRAPformat{\produce@RRAP{*#1\href{mailto:#2}{#2}}}\frontmatter@RRAPformat}
  {}{}
}%
\makeatother
\begin{document}

\preprint{AIP/123-QED}

%\title[Sample title]{}
\title{Entropic and algebraic transcript-based tools
in time series analysis}
% Force line breaks with \\
\author{José M. Amigó}
% \altaffiliation[Also at ]{Centro de Investigaci\'{o}n Operativa, Universidad Miguel Hern\'{a}ndez, 03202 Elche, Spain}%Lines break automatically or can be forced with \\
\author{Roberto Dale}%
 \email{jm.amigo@umh.es, rdale@umh.es.}
\affiliation{Centro de Investigaci\'{o}n Operativa, Universidad Miguel Hern\'{a}ndez, 03202 Elche, Spain} 
%\\This line break forced with \textbackslash\textbackslash

%\author{C. Author}
%\homepage{http://www.Second.institution.edu/~Charlie.Author.}
%\affiliation{%
%Second institution and/or address%\\This line break forced% with \\
%}%

\date{\today}% It is always \today, today,
             %  but any date may be explicitly specified

\begin{abstract}

Algebraic representations of time series are symbolic representations whose
symbols belong to a finite group. Precisely, the framework of the present
paper is the analysis of coupled time series in algebraic representations
and, more generally, group-valued time series. The prototype of an algebraic
representation is an ordinal representation, whose symbols are permutations,
also called ordinal patterns in the context of time series analysis. In
fact, permutations, endowed with function composition, build a group called
a symmetric group. A simple way to harness the algebraic structure of the
alphabet in such cases is the concept of transcript from one group element
to another. Since transcripts involve two group elements, they are very
suitable for studying couplings between time series in the same algebraic
representation. In this paper, we outline several existing entropic and algebraic 
transcript-based tools for analyzing coupled time series and systems.
In addition to entropy, the entropic tools include divergence, statistical
complexity and mutual information. The algebraic tools comprise order
classes and, most recently, the Cayley and Kendall distances. We use the detection of generalized synchronization in a well-studied coupled system to compare the performances of some of those tools. To this end,
we also provide an alternative tool called the similarity distance between
times series, which is a mean Kendall distance. We found that the novel similarity distance outperforms the other tools tested.
\end{abstract}

\maketitle

\begin{quotation}
Symbolization is a useful method to gain valuable
insights in time series analysis. To do this, the entries are generally
discretized and replaced with symbols. In the case of algebraic
representations (one of the main characters of this paper), the symbols make
up an algebraic structure called a group, which allows for additional
leverage. Such is the case when the symbols are permutations (also called
ordinal patterns), obtained by ranking the entries of a time series within
sliding windows. This symbolization technique has become very popular among
data analysts not only for its conceptual simplicity and practical
advantages, but also for its proven power in extracting useful information
from data. A simple way to exploit the algebraic properties of the symbols
in algebraic representations is the concept of transcript. In this paper, we
discuss the basics of transcripts, their connection with two algebraic
distances in both permutation and general groups, and the application of
transcript-based tools in the analysis of coupled time series.
\end{quotation}

%%%%%%%%%%%%%%%%%%%%%%%%%%%%%%%%%%%%%%%%%%%%%%%%%%%%%%%%%%%%%%%%%%%%%%%%%%%%
\section{Introduction}

\label{sec1}

Symbolic representation of time series is a usual technique in time series
analysis \cite{Kantz1997}, where by time series we mean here finite or
infinite sequences of real numbers (the data). The perhaps simplest instance
consists of partitioning the range of the data and labeling the resulting
intervals; the entries of the time series are then trade-off for the labels
of the intervals they belong to. This procedure is well known in statistics
and dynamical systems \cite{Hirata2023}. More general methods trade-off
segments of the time series (even the whole series if finite) for graphs,
topological properties or algebraic structures, such as different types of
networks \cite{Zhang2006,Small2013,Zou2019}, topological invariants \cite%
{Lum2013,Perea2019,Haruna2023} and finite groups \cite{Amigo2016}. In such
cases, we speak of graph-theoretical, topological and algebraic
representations, respectively. The prototype of the latter are the ordinal
representations, i.e., symbolic representations whose symbols are rank
vectors, usually referred to as ordinal patterns \cite{Bandt2002,Mohr2020}
in time series analysis. Therefore, ordinal patterns can be viewed as
permutations, which build a classic algebraic group called the symmetric group \cite%
{Herstein1996,Lang2005}.

In this paper we are going to focus on algebraic representations and how to
take advantage of the fact that their alphabets are groups. To the best of
our knowledge, the algebraic nature of ordinal patterns was first exploited
by Monetti et al. \cite{Monetti2009}, who introduced the concept of
transcript from one ordinal pattern to another; its generalization to
algebraic representations is straightforward \cite{Amigo2016}. Furthermore,
since transcripts involve two symbols, they are well-suited for studying
serial dependence in time series (self-transcripts) and the coupling between
systems (cross-transcripts) \cite{Amigo2012}. We are not aware of any other
proposals in this regard.

Therefore, here we will discuss the concept of transcript in a group,
present some traditional, recent and novel transcript-based tools in time
series analysis, and illustrate their applications with numerical
simulations. Among the traditional tools, we consider some entropic ones
(basically: entropy, divergence and mutual information) and the algebraic
order classes, all of which have a record of practical applications \cite%
{Monetti2009,Amigo2012}. The recent tools comprise two algebraic distances,
namely, the Cayley and Kendall distances \cite{Amigo2025}. And the novel
tool is a mean Kendall distance between two time series proposed in this
paper, which we call the similarity distance.

To compare the performance of the above-mentioned transcript-based tools, we use the detection of generalized synchronization
in a drive-response system. As it turns out, the performance of the
similarity distance compares favorably with that of the entropy-complexity
plane, transcript mutual information, transcript order classes, and the
transcript probability distribution-based Jensen-Shannon distance (the
square root of the Jensen-Shannon divergence). This suggests that the
similarity distance can be a useful tool in time series analysis.

Interestingly, as we transition from the concept of transcript to its
applications in time series analysis, we will come across direct connections
between transcripts and basic results in group theory, a very rewarding fact
because transcripts are a hallmark of algebraic representations. Indeed, the
transcript from one group element to another can be used to calculate the
Cayley and Kendall distances between them. Moreover, Cayley's theorem, which
embeds any finite group into a certain symmetric group, can be implemented
via transcripts. Incidentally, Cayley's embedding shows that ordinal
patterns are a kind of universal symbols for algebraic representations,
although this encoding is practical only for small groups \cite{Amigo2025}.

To sum up, this paper has three main objectives.

\begin{enumerate}
\item[(1)] To provide an overview of transcripts and their applications in
algebraic representations of time series;

\item[(2)] To compare the performances of several transcript-based tools
using numerical simulations;

\item[(3)] To present a novel algebraic tool, the similarity distance (a mean Kendall distance), which outperforms the other options in our numerical
tests.
\end{enumerate}

With these objectives in mind, this paper is organized as follows. We begin
by introducing the main characters: permutation groups (Section \ref{sec2}),
algebraic representations of time series (Section \ref{sec3}), and the
concept of transcript (Section \ref{sec4}). The latter will then allow us to
implement a homomorphism from any finite group to a group of permutations
(Cayley's theorem). In Section \ref{sec5} we outline a selection of
entropic and algebraic transcript-based tools for time series analysis. This
selection ranges from the more traditional ones (entropy, entropy-like
quantities and order classes) to the more recent ones (the Cayley and
Kendall distances). The numerical simulations of Section \ref{sec7} using
two non-identical, unidirectionally coupled H\'{e}non maps aim to illustrate
the potential of the afore-mentioned tools in applications, in this case,
for detecting generalized synchronization. One of these tools is the
similarity distance, proposed in Section \ref{sec74}, which discriminates
between strong and weak generalized synchronizations better than its
competitors. The conclusions, together with a summary of the main contributions of this paper, are the content of
Section \ref{sec8}.

%%%%%%%%%%%%%%%%%%%%%%%%%%%%%%%%%%%%%%%%%%%%%%%%%%%

\section{Ordinal representations and the groups of permutations}

\label{sec2}

The concept of algebraic representation in time series analysis is central
to this article. In this section we present its prototype, the ordinal
representation, a symbolic representation of time series whose symbols are
permutations, also called ordinal patterns, which was proposed by Bandt and
Pompe \cite{Bandt2002}.

Let $\mathrm{x}=(x_{t})_{t\geq 0}$ be an $\mathbb{R}$-valued time series.
The \textit{ordinal representation} of $\mathrm{x}$ with parameter $L\geq 2$
is the symbolic time series $(\mathbf{r}_{t})_{t\geq 0}$, where $\mathbf{r}%
_{t}=(r_{1},r_{2},...,r_{L})$ is the rank vector of the sequence (window,
block, word, ...) $x_{t}^{L}:=x_{t},x_{t+1},...,x_{t+L-1}$, i.e., 
\begin{equation}
x_{t+r_{1}-1}<x_{t+r_{2}-1}<\ldots <x_{t+r_{L}-1}.  \label{ord patt}
\end{equation}%
We will write $\mathbf{r}_{t}=\mathrm{rank}(x_{t}^{L})$ and use the set $%
\{1,2,...,L\}$ to rank the elements of $x_{t}^{L}$. For example, if $%
x_{t}^{3}=1.7,$\thinspace $0.5,\,1.2$, then $\mathbf{r}_{t}=(2,3,1)$. In
case of a tie $x_{t+i}=x_{t+j}$, there are several conventions, e.g., $%
x_{t+i}<x_{t+j}$ if $i<j$. The perhaps most popular convention (especially
if there are many ties) is to add random noise to eliminate the tie \cite%
{Myers2020}. In addition to equation (\ref{ord patt}), other definitions of
ordinal patterns can be found in the literature \cite{Bian2012,Unakafova2013,Schnurr2022}.

In equation (\ref{ord patt}) it is assumed that the delay time\ \textrm{T}
used to analyze the time series $\mathrm{x}$ is $1$. In practice, though,
when the rate of change of the data is small compared to the sampling
frequency of the signal, it may be convenient to use $\mathrm{T}>1$, in
which case equation (\ref{ord patt}) generalizes to%
\begin{equation}
x_{t+r_{1}\mathrm{T}-1}<x_{t+r_{2}\mathrm{T}-1}<\ldots <x_{t+r_{L}\mathrm{T}%
-1}.  \label{ord patt 2}
\end{equation}%
Therefore, ordinal representations have two parameters: the length of the
patterns $L\geq 2$ (sometimes called embedding dimension) and the delay time 
$\mathrm{T}\geq 1$ (also called delay lag). Here $\mathrm{T}=1$, unless otherwise
stated. An overview of selection techniques for delay times can be found in Tan et al. \cite{Tan2023}.

The rank vectors $\mathbf{r}_{t}=\mathrm{rank}(x_{t}^{L})$ are usually
called \textit{ordinal patterns} of length $L$, ordinal $L$-patterns, or
simply $L$-patterns if the only patterns in question are the ordinal ones.
They are also aptly called permutations since, as any total ranking, $%
\mathbf{r}_{t}=(r_{1},r_{2},...,r_{L})$ can be viewed as the one-line form
of the permutation%
\begin{equation}
\left( 
\begin{array}{ccccc}
1 & \cdots & k & \cdots & L \\ 
r_{1} & \cdots & r_{k} & \cdots & r_{L}%
\end{array}%
\right) .  \label{two-line form}
\end{equation}%
This is the view that best suits our purposes. For this reason, we will
mainly use the one-line form $(r_{1},r_{2},...,r_{L})$ and only occasionally
the two-line form (\ref{two-line form}) for permutations. In numerical
examples, the permutation $(r_{1},r_{2},...,r_{L})$ will be shortened to $%
r_{1}r_{2}...r_{L}$; for example, the permutation $(2,3,1)$ will be written
as $231$.

\begin{remark}
\label{RemarkAlternativeSymbols}Other symbols for the rank relationships (%
\ref{ord patt}) and (\ref{ord patt 2}) have also been used in the
literature, e.g., 0-1 square matrices by Bunk et al. \cite{Bunk2013} and
Haruna \cite{Haruna2019}, antisymmetric matrices with off-diagonal
components $\pm 1$ by Rios de Souza and Hlinka \cite{RiosSouza2022}, and the
so-called \textquotedblleft rank sequences\textquotedblright\ by Haruna and
Nakajima \cite{Haruna2011}.
\end{remark}

Ordinal patterns have been used for classification \cite%
{Keller2003,Parlitz2012}, characterization of dynamics and couplings \cite%
{Bandt2002,Monetti2009,Parlitz2013}, tests of serial dependence \cite%
{Canovas2011,Weiss2022,Weiss2022B}, and even cryptanalysis \cite%
{Arroyo2009}. A generalization of ordinal patterns to multivariate data
was proposed by Mohr et al. \cite{Mohr2020}.

Regarding the objectives of this paper, the most important feature of the
ordinal representation $(\mathbf{r}_{t})_{t\geq 0}$ is the fact that the
symbols $\mathbf{r}_{t}$ belong to a group. In general, a group $(\mathcal{G}%
,\cdot )$ is a set $\mathcal{G}$ endowed with a binary operation
\textquotedblleft $\cdot $\textquotedblright , sometimes called composition
law or product, satisfying the following properties \cite%
{Lang2005,Fraleigh2013}.

\begin{description}
\item[(G1)] \textit{Associativity}: For all $a,b,c\in \mathcal{G}$, it is
true that $(a\cdot b)\cdot c=a\cdot (b\cdot c)$.

\item[(G2)] \textit{Identity element}: There exists an element $e\in 
\mathcal{G}$, called the \textit{unity} (or \textit{neutral}) \textit{element%
}, such that $a\cdot e=e\cdot a=a$ for all $a\in \mathcal{G}$.

\item[(G3)] \textit{Inverse element}: For every $a\in \mathcal{G}$, there
exists an element $a^{-1}\in \mathcal{G}$, called the \textit{inverse element%
} of $a$, such that $a\cdot a^{-1}=a^{-1}\cdot a=e$.
\end{description}

It can be proved that the identity of a group and the inverse of each
element are unique. Groups whose product is commutative (i.e., $a\cdot
b=b\cdot a$ for all $a,b\in \mathcal{G}$) are called \textit{commutative} or 
\textit{abelian}. Examples of abelian groups are the real numbers endowed
with addition, and the nonzero real numbers endowed with multiplication;
examples of non-commutative groups will appear shortly. If the product is
clear from the context, then $(\mathcal{G},\cdot )$ is shortened to $%
\mathcal{G}$.

In the case of the \textit{ordinal representation} with parameter $L$, the
representation group is the \textit{symmetric group of degree} $L$, denoted $%
\mathrm{Sym}(S)$, where $\mathrm{Sym}(S)$ is composed of the permutations
(i.e., bijections) of a set $S$ with cardinality $\left\vert S\right\vert =L$
and the binary operation is the composition of permutations. Since the
properties of $\mathrm{Sym}(S)$ do not depend on $S$ but only on $\left\vert
S\right\vert $, we choose the conventional set $S=\{1,2,...,L\}$ (see
equation (\ref{two-line form})), unless otherwise stated, and also denote $%
\mathrm{Sym}(S)$ by $\mathrm{Sym}(\left\vert S\right\vert )$ or $\mathrm{Sym}%
(L)$. The basic properties of $\mathrm{Sym}(L)$ are: (i) its identity
element is the identity permutation $\mathrm{id}=(1,2,...,L)$, (ii) it is
non-commutative for $L\geq 3$, and (iii) its cardinality is $L!$.

At this point, it is important to make a note on the composition of
permutations (and functions for that matter). There are two ways to define
the composition $\mathbf{r}\cdot \mathbf{s}$ of two permutations $\mathbf{r},%
\mathbf{s}\in \mathrm{Sym}(S)$. In the conventional definition (as a
\textquotedblleft left action\textquotedblright\ of the group on itself \cite%
{Amigo2025}), $(\mathbf{r}\cdot \mathbf{s})(k):=\mathbf{r}(\mathbf{s}%
(k))\equiv (\mathbf{r}\circ \mathbf{s})(k)\mathbf{\ }$for all $k\in S$,
where \textquotedblleft $\circ $\textquotedblright\ stands for the
composition of functions. In this case, the second permutation $\mathbf{s}$
acts first and the first permutation $\mathbf{r}$ acts second, so that%
\begin{equation}
\mathbf{r}\circ \mathbf{s}=(r_{1},r_{2},...,r_{L})\circ
(s_{1},s_{2},...,s_{L})=(r_{s_{1}},r_{s_{2}},...,r_{s_{L}}).
\label{rxs left}
\end{equation}%
However, it is more natural in calculations that\textbf{\ }$\mathbf{r}$ acts
first and then $\mathbf{s}$, in which case one speaks of a \textquotedblleft
right action\textquotedblright\ of the group on itself \cite{Amigo2025}.
Formally, one writes then $(k)(\mathbf{r}\cdot \mathbf{s}):=((k)\mathbf{r})%
\mathbf{s}\equiv (\mathbf{r}\ast \mathbf{s})(k)$ for all $k\in S$, i.e.,%
\begin{equation}
\mathbf{r}\ast \mathbf{s}=(r_{1},r_{2},...,r_{L})\ast
(s_{1},s_{2},...,s_{L})=(s_{r_{1}},s_{r_{2}},...,s_{r_{L}}).
\label{rxs right}
\end{equation}%
Note that 
\begin{equation}
\mathbf{r}\ast \mathbf{s}=\mathbf{s}\circ \mathbf{r}.  \label{left right}
\end{equation}

In this paper, we will use the definition (\ref{rxs right}) because this is
the usual choice in the literature, particularly in papers dealing with
transcripts. See Amig\'{o} and Dale \cite{Amigo2025} for a brief overview on
group actions where, however, the conventional composition (\ref{rxs left})
is chosen; use equation (\ref{left right}) to switch between both
definitions. For mathematical accounts, the interested reader is referred to
the books by Lang \cite{Lang2005} and Fraleigh \cite{Fraleigh2013}.

\begin{example}
For further references, the multiplication table of $(\mathrm{Sym}(3),\ast )$, the most common group in ordinal representations \cite{Bandt2019,Bandt2007}, is
\begin{equation}
\begin{tabular}{|c||l|l|l|l|l|l|}
\hline
$\mathbf{r}\ast \mathbf{s}$ & $123$ & $132$ & $213$ & $231$ & $312$ & $321$
\\ \hline\hline
$123$ & $123$ & $132$ & $213$ & $231$ & $312$ & $321$ \\ \hline
$132$ & $132$ & $123$ & $231$ & $213$ & $321$ & $312$ \\ \hline
$213$ & $213$ & $312$ & $123$ & $321$ & $132$ & $231$ \\ \hline
$231$ & $231$ & $321$ & $132$ & $312$ & $123$ & $213$ \\ \hline
$312$ & $312$ & $213$ & $321$ & $123$ & $231$ & $132$ \\ \hline
$321$ & $321$ & $231$ & $312$ & $132$ & $213$ & $123$ \\ \hline
\end{tabular}
\label{mult table Sym(3)B}
\end{equation}
where the composition $\mathbf{r}\ast \mathbf{s}$ is calculated as in equation
(\ref{rxs right}), $\mathbf{r}$ labels the rows (leftmost column) and $%
\mathbf{s}$ labels the columns (topmost row). The six permutations of $%
\mathrm{Sym}(3)$ are ordered lexicographically.
\end{example}

%%%%%%%%%%%%%%%%%%%%%%%%%%%%%%%%%%%%%%%%%%%%%%%%%%%%%%

\section{Algebraic representations}

\label{sec3}

In this section we generalize the ordinal representation of time series
discussed in Section \ref{sec2}. To this end, we consider symbolic
representations $\mathrm{\alpha }=(\alpha _{t})_{t\geq 0}$ of real-valued
time series $\mathrm{x}=(x_{t})_{t\geq 0}$ whose alphabet is now a general
group.

To begin with, discretization of a real-valued time series $\mathrm{x}%
=(x_{t})_{t\geq 0}$ produces a discrete-valued random sequence, both if $%
\mathrm{x}$ is a random trajectory and a (in general, projection of a higher
dimensional) deterministic orbit, as assumed in nonlinear time series
analysis.

To be more specific, let $\mathrm{x}=(x_{t})_{t\geq 0}$ be the orbit of the
initial condition $x_{0}$ generated by a measure-preserving dynamical system 
$(\Omega ,\mathcal{B},\mu ,f)$, i.e., (i) $(\Omega ,\mathcal{B},\mu )$ is a
probability space (where $\Omega $ is the state space of the system, $%
\mathcal{B}$ is a sigma-algebra of subsets of $\Omega $, and $\mu $ is a
positive measure on the measurable space $(\Omega ,\mathcal{B})$), (ii) $%
f:\Omega \rightarrow \Omega $ is a measurable mapping preserving the measure 
$\mu $ (that is, $\mu (f^{-1}(B))=\mu (B)$ for all $B\in \mathcal{B}$), and
(iii) $x_{t}=f^{t}(x_{0})$ (the $t$-th iterate of $x_{0}\in \Omega $ under $%
f $). Suppose that the state space $\Omega $ is discretized by the partition 
$\mathcal{P}=\{P_{1},P_{2},...,P_{\left\vert \mathcal{P}\right\vert }%
\mathcal{\}}$, where $P_{k}\in \mathcal{B}$ for $k=1,...,\left\vert \mathcal{%
P}\right\vert $. If $X_{t}^{\mathcal{P}}:\Omega \rightarrow
\{1,...,\left\vert \mathcal{P}\right\vert \}$ are the mappings defined as%
\begin{equation}
X_{t}^{\mathcal{P}}(x)=k\;\Leftrightarrow \;f^{t}(x)\in P_{k},
\label{symb dyn}
\end{equation}%
then $\mathbf{X}^{\mathcal{P}}:(X_{t}^{\mathcal{P}})_{t\geq 0}$ is a
stationary random process with alphabet (or state space) $\mathcal{A}%
=\{1,...,\left\vert \mathcal{P}\right\vert \}$, called the \textit{symbolic
dynamics of} $f$ \textit{with respect to} $\mathcal{P}$, whose probability
distribution is given by \cite{Hirata2023}%
\begin{eqnarray}
&&\left. \mathbb{P}(X_{t}^{\mathcal{P}}=k_{0},X_{t+1}^{\mathcal{P}%
}=k_{1},...,X_{t+n}^{\mathcal{P}}=k_{n})\right.  \label{P=mu} \\
&=&\mu (P_{k_{0}}\cap f^{-1}(P_{k_{1}})\cap ...\cap f^{-n}(P_{k_{n}})). 
\notag
\end{eqnarray}%
Therefore, the symbolic representation $\mathrm{\alpha }=(\alpha
_{t})_{t\geq 0}$ of $\mathrm{x}=(x_{t})_{t\geq 0}$, where $\alpha _{t}=k\in 
\mathcal{A}$ if $x_{t}=f^{t}(x_{0})\in P_{k}$, is, according to equation (%
\ref{symb dyn}), a realization of the random process $\mathbf{X}^{\mathcal{P}%
}$. See the books by Kitchens \cite{Kitchens1998} and Lind \cite{Lind2003}
for the general approach, and the book by Hao and Zhen \cite{Hao1998} for
applications of symbolic dynamics to Chaos Theory.

In particular, an ordinal representation of $\mathrm{x}$ is obtained by
coarse-graining $\Omega $ with the \textit{ordinal partition}

\begin{equation*}
\mathcal{P}_{L}=\{P_{\mathbf{r}}\neq \emptyset :\mathbf{r}\in \mathrm{Sym}(L)%
\mathcal{\}},
\end{equation*}%
where 
\begin{equation*}
P_{\mathbf{r}}=\{x\in \Omega :\mathrm{rank}(x,\,f(x),...,\,f^{L-1}(x))=%
\mathbf{r}\}.
\end{equation*}

In nonlinear time series analysis, the probability of any outcome is
estimated by their frequency (maximum likelihood estimator) in a
statistically significant sample of symbolic series $\mathbf{X}^{\mathcal{P}%
}(x_{0})=(k_{t})_{t\geq 0}$ (i.e., initial conditions $x_{0}\in \Omega $) or
a long enough time series. The resulting measure $\mu $, equation (\ref{P=mu}%
), is called \cite{Eckmann1985} the physical or natural measure of $\Omega $.

\begin{definition}
\label{DefAlgRep}A symbolic representation $\mathrm{\alpha }=(\alpha
_{t})_{t\geq 0}$ of a time series $\mathrm{x}=(x_{t})_{t\geq 0}$ is called
an \textit{algebraic representation} if the symbols $\alpha _{t}$ are
elements of a \textit{finite} group $(\mathcal{G},\cdot )$.
\end{definition}

In addition to the group $\mathrm{Sym}(L)$ of the ordinal representation
with embedding dimension $L$, examples of algebraic representations include $%
\mathcal{G=\{}0,1\mathcal{\}}$ endowed with the XOR operation (addition
modulo 2) and, more generally, $\mathcal{G=\{}0,1,...,n-1\mathcal{\}}$
endowed with addition modulo $n$. These groups arise in digital
communications and cryptography. Henceforth, all groups are finite.

\begin{remark}
\label{RemarkAlgRep}

\begin{enumerate}
\item Among the different algebraic symbols for the rank relationships (\ref%
{ord patt}) and (\ref{ord patt 2}) mentioned in Remark \ref%
{RemarkAlternativeSymbols}, only the $0$-$1$ square matrices proposed by
Bunk et al. \cite{Bunk2013} build a group (with respect to matrix
multiplication). More generally, for a subset $S$ of a group $\mathcal{G}$
to be the alphabet of an algebraic representation, $S$ must be a subgroup of 
$\mathcal{G}$.

\item A different situation occurs when the alphabet of a symbolic
representation can be endowed with an algebraic structure. For example, the
alphabet $\mathcal{A}=\{1,...,\left\vert \mathcal{P}\right\vert \}$ of the
symbolic dynamics with respect to a partition $\mathcal{P}$, equation (\ref%
{symb dyn}), can be transformed into a group just by adding the letters
modulo $\left\vert \mathcal{P}\right\vert $.
\end{enumerate}
\end{remark}

It is remarkable that, according to the following theorem of Group Theory,
called the \textit{Cayley theorem} \cite{Herstein1996,Amigo2025},
permutations can be seen as universal symbols for algebraic representations.

\begin{theorem}
\label{ThmCayley}(Cayley) Every group $(\mathcal{G},\cdot )$ is isomorphic
to a subgroup of $(\mathrm{Sym}(\left\vert \mathcal{G}\right\vert ,\ast )$,
i.e., there is a mapping (called a group homomorphism) $\Phi :\mathcal{G}%
\rightarrow \mathrm{Sym}(\left\vert \mathcal{G}\right\vert )$ such that

\begin{description}
\item[(P1)] $\Phi $ sends the identity element $e$ of $\mathcal{G}$ to the
identity permutation $(1,2,...,\left\vert \mathcal{G}\right\vert )$ of $%
\mathrm{Sym}(\left\vert \mathcal{G}\right\vert \mathcal{)}$;

\item[(P2)] $\Phi (a\cdot b)$ $=\Phi (a)\ast \Phi (b)$ for all $a,b\in 
\mathcal{G}$. Hence, $\Phi (a^{-1})=(\Phi (a))^{-1}$.
\end{description}
\end{theorem}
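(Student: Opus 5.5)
The plan is to construct $\Phi$ explicitly through the regular representation, using the right-action convention $\ast$ of equation (\ref{rxs right}). First I would fix an enumeration of the group, $\mathcal{G}=\{g_1,g_2,\ldots,g_n\}$ with $n=\left\vert \mathcal{G}\right\vert$, and use it to identify $\mathcal{G}$ with $S=\{1,\ldots,n\}$. For each $a\in\mathcal{G}$ I would define the map $\rho_a:\mathcal{G}\to\mathcal{G}$ by $\rho_a(g)=g\cdot a$, that is, right multiplication. Then I would set $\Phi(a)$ to be the permutation of $S$ with $(k)\Phi(a)=j$ whenever $g_k\cdot a=g_j$.

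The key steps, in order, are as follows.

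\begin{enumerate}
\item[(i)] \emph{$\Phi(a)$ is a permutation.} The map $\rho_a$ is injective, since $g\cdot a=h\cdot a$ gives $g=h$ after multiplying on the right by $a^{-1}$ (this uses (G1)--(G3)). Because $\mathcal{G}$ is finite, $\rho_a$ is therefore bijective.

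\item[(ii)] \emph{Property (P1).} By (G2), $\rho_e$ is the identity map. Hence $\Phi(e)=(1,2,\ldots,n)$.

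\item[(iii)] \emph{Property (P2).} For any index $k$, the element labelled $(k)(\Phi(a)\ast\Phi(b))$ is obtained by applying $\Phi(a)$ first and $\Phi(b)$ second. That element is $(g_k\cdot a)\cdot b$, which by associativity (G1) equals $g_k\cdot(a\cdot b)$, the element labelled $(k)\Phi(a\cdot b)$. This gives $\Phi(a\cdot b)=\Phi(a)\ast\Phi(b)$.

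\item[(iv)] \emph{Inverses.} Applying (P2) with $b=a^{-1}$ and then (P1) gives $\Phi(a)\ast\Phi(a^{-1})=\mathrm{id}$. The same holds in the other order, so $\Phi(a^{-1})=\Phi(a)^{-1}$.

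\item[(v)] \emph{Isomorphism onto a subgroup.} $\Phi$ is injective, because $\Phi(a)=\Phi(b)$ forces $e\cdot a=e\cdot b$, so $a=b$. The image $\Phi(\mathcal{G})$ is closed under $\ast$ by (P2) and under inverses by step (iv), so it is a subgroup of $\mathrm{Sym}(n)$. Therefore $\Phi$ is an isomorphism from $\mathcal{G}$ onto this subgroup.
\end{enumerate}

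The main obstacle is the convention in step (iii). With the right-action product $\ast$, right multiplication $\rho_a$ is the correct choice. Had we used left multiplication $g\mapsto a\cdot g$, we would get an anti-homomorphism $\Phi(a\cdot b)=\Phi(b)\ast\Phi(a)$, and we would have to switch to the product $\circ$ via equation (\ref{left right}) or use $a^{-1}$ instead.

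I would also note that $\Phi(a)$ can be read off from the transcript, to be introduced in Section \ref{sec4}. This is the point the paper alludes to, but the proof itself does not depend on it.
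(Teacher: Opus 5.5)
Your proof is correct and is the same regular-representation argument the paper gives in Theorem \ref{ThmCayleyViaTranscripts}. The only difference is that you let $a$ act by right multiplication $g\mapsto g\cdot a$ (in transcript language, $T(a^{-1},\cdot)$), whereas the paper uses the conjugate-transcript left translation $\tilde{T}_a(g)=a^{-1}\cdot g$, which is exactly the ``use $a^{-1}$ instead'' fix you mention; both are homomorphisms into $(\mathrm{Sym}(\mathcal{G}),\ast)$, and your step (v) also checks injectivity of $\Phi$ explicitly, which the paper's proof takes for granted when it concludes that $\mathcal{G}$ is isomorphic to $\Phi(\mathcal{G})$.
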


Therefore, if we restrict the range of the homomorphism $\Phi $ to its
image, $\Phi (\mathcal{G})$, we obtain a group isomorphism $\mathcal{G}%
\rightarrow \Phi (\mathcal{G})$, i.e., a bijective (one-to-one, invertible)
homomorphism, called Cayley's isomorphism which, abusing notation, will also
be denoted by $\Phi $. In this case, $\mathcal{H}:=\Phi (\mathcal{G})$ is a
subgroup of $\mathrm{Sym}(\left\vert \mathcal{G}\right\vert )$ with
cardinality $\left\vert \mathcal{H}\right\vert =\left\vert \mathcal{G}%
\right\vert $. Sometimes we also call the homomorphism $\Phi :\mathcal{G}%
\rightarrow \mathrm{Sym}(\left\vert \mathcal{G}\right\vert )$ an embedding.

In view of Cayley's theorem, any group $\mathcal{G}$ can be represented by
means of $\left\vert \mathcal{G}\right\vert $ permutations on the set $%
\{1,2,...,\left\vert \mathcal{G}\right\vert !\}$. From a practical point of
view, finding the minimal-order symmetric group into which a given group $%
\mathcal{G}$ embeds in $\mathrm{Sym}(\mathcal{G})$ is rather difficult \cite%
{Johnson1971,Grechkoseeva2003}. On the other hand, Cayley's theorem is
practical when it comes to endow groups of low and moderate cardinality with
a metric, as we will discuss in Section \ref{sec62}.

Cayley's isomorphism can be implemented in several ways. In Section \ref%
{sec4} we are going to learn some of them.

%%%%%%%%%%%%%%%%%%%%%%%%%%%%%%%%%%%%%%%%%%%%%%%

\section{Transcripts}

\label{sec4}

Ordinal representations are very popular in time series analysis \cite%
{Zanin2012,Amigo2015B}. However, very few applications have harnessed the
algebraic structure of the symmetric group so far. In fact, the only
applications we are aware of build on the concept of transcript, introduced
by Monetti et al. \cite{Monetti2009}. In this section, we consider this
concept in the more general framework of algebraic representations and
discuss its main properties, before revisiting some applications in Section %
\ref{sec5}.

\begin{definition}
\label{DefTranscriptionMap}Given a finite group $(\mathcal{G},\cdot )$, the 
\emph{transcription mapping} $T:\mathcal{G}\times \mathcal{G}\rightarrow 
\mathcal{G}$ is defined as%
\begin{equation}
T(a,b)=b\cdot a^{-1}.  \label{T(a,b)}
\end{equation}%
The group element $T(a,b)$ is called the transcript from the
source $a$ to the target $b$.
\end{definition}

The basic properties of the transcription mapping are the following.

\begin{description}
\item[(T1)] $T(b,a)=T(a,b)^{-1}$.

\item[(T2)] $T(b,c)\cdot T(a,b)=T(a,c).$

\item[(T3)] $T$ is a $\left\vert \mathcal{G}\right\vert $-to-1 mapping. In
fact, for $\forall c\in \mathcal{G}$, 
\begin{equation*}
T^{-1}(c)=\{(a,\,c\cdot a),\;\forall a\in \mathcal{G}\}=\{(c^{-1}\cdot
b,\,b),\;\forall b\in \mathcal{G}\}.
\end{equation*}%
Trivially, the sets $\{T^{-1}(c):c\in \mathcal{G}\}$ build a partition of $%
\mathcal{G}\times \mathcal{G}$.

\item[(T4)] \textit{Equivalence property}. Given a triple $(a,b,T(a,b))$,
any pair of elements (i.e., $(a,b)$, $(a,T(a,b))$ or $(b,T(a,b))$)
univocally determines the remaining element. This property is instrumental
in many proofs involving transcripts.
\end{description}

\begin{example}
For $\mathcal{G}=\mathrm{Sym}(3)$, we obtain the following transcripts $T(%
\mathbf{r},\mathbf{s})=\mathbf{s}\ast \mathbf{r}^{-1}$,%
\begin{equation*}
\begin{tabular}{|c||l|l|l|l|l|l|}
\hline
$\mathbf{s}\ast \mathbf{r}^{-1}$ & $123$ & $132$ & $213$ & $231$ & $312$ & $%
321$ \\ \hline\hline
$123$ & $123$ & $132$ & $213$ & $231$ & $312$ & $321$ \\ \hline
$132$ & $132$ & $123$ & $312$ & $321$ & $213$ & $231$ \\ \hline
$213$ & $213$ & $231$ & $123$ & $132$ & $321$ & $312$ \\ \hline
$231$ & $312$ & $321$ & $132$ & $123$ & $231$ & $213$ \\ \hline
$312$ & $231$ & $213$ & $321$ & $312$ & $123$ & $132$ \\ \hline
$321$ & $321$ & $312$ & $231$ & $213$ & $132$ & $123$ \\ \hline
\end{tabular}%
\end{equation*}%
where the source permutation $\mathbf{r}$ labels the rows and the target
permutation $\mathbf{s}$ labels the columns.
\end{example}

If $(\mathcal{G},\cdot )$ is non-abelian, a \emph{conjugate transcription
mapping} $\tilde{T}:\mathcal{G}\times \mathcal{G}\rightarrow \mathcal{G}$
can be defined as%
\begin{equation}
\tilde{T}(a,b)=a^{-1}\cdot b.  \label{T(a,b) conjugate}
\end{equation}%
Note that%
\begin{equation}
\tilde{T}(a,b)=T(b^{-1},a^{-1}).  \label{T conj = T}
\end{equation}%
The transformation $\tilde{T}_{a}:=\tilde{T}(a,\cdot ):\mathcal{G}%
\rightarrow \mathcal{G}$, i.e.,%
\begin{equation}
\tilde{T}_{a}(b):=\tilde{T}(a,b)=a^{-1}\cdot b  \label{Ta(b) conjugate}
\end{equation}%
is called a \textit{left} \textit{translation by} $a$.

The following result shows how to implement Cayley's isomorphism via
conjugate transcripts.

\begin{theorem}
\label{ThmCayleyViaTranscripts}For each $a\in \mathcal{G}$, the mapping $%
\Phi :a\mapsto \tilde{T}_{a}=\tilde{T}(a,\cdot )$ is an isomorphism from $(%
\mathcal{G},\cdot )$ onto a subgroup of $(\mathrm{Sym}(\mathcal{G}),\ast )$.
\end{theorem}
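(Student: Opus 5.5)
The plan is to verify directly the three things the statement requires: (i) each $\tilde{T}_{a}$ is a permutation of $\mathcal{G}$, (ii) $\Phi$ is a homomorphism with respect to the right-action product $\ast$ of equation (\ref{rxs right}), and (iii) $\Phi$ is injective. By the remark after Theorem \ref{ThmCayley}, (ii) and (iii) together give an isomorphism onto the subgroup $\Phi(\mathcal{G})\subset\mathrm{Sym}(\mathcal{G})$. Labelling the elements of $\mathcal{G}$ by $1,\dots,\left\vert\mathcal{G}\right\vert$ identifies $\mathrm{Sym}(\mathcal{G})$ with $\mathrm{Sym}(\left\vert\mathcal{G}\right\vert)$, so this is a concrete realization of Theorem \ref{ThmCayley}.

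For (i), I will show that $\tilde{T}_{a^{-1}}$ is a two-sided inverse of $\tilde{T}_{a}$. Indeed, $\tilde{T}_{a^{-1}}(\tilde{T}_{a}(b))=a\cdot a^{-1}\cdot b=b$, and symmetrically in the other order, using only (G1)–(G3). Alternatively, one can invoke the equivalence property (T4) together with equation (\ref{T conj = T}): the pair $(a,\tilde{T}(a,b))$ determines $b$, so $\tilde{T}_{a}$ is injective on a finite set, hence bijective.

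Step (ii) is the one step needing care, because of the composition convention. With $\ast$ as in equation (\ref{rxs right}), the permutation $\tilde{T}_{a}\ast\tilde{T}_{b}$ applies $\tilde{T}_{a}$ first and then $\tilde{T}_{b}$. So I will compute, for every $x\in\mathcal{G}$,
\begin{equation*}
\tilde{T}_{a\cdot b}(x)=(a\cdot b)^{-1}\cdot x=b^{-1}\cdot(a^{-1}\cdot x)=\tilde{T}_{b}(\tilde{T}_{a}(x))=(\tilde{T}_{a}\ast\tilde{T}_{b})(x),
\end{equation*}
using $(a\cdot b)^{-1}=b^{-1}\cdot a^{-1}$ (uniqueness of inverses) and associativity. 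This gives (P2). It also explains the choice of the inverse in $\tilde{T}_{a}=a^{-1}\cdot(\cdot)$: without the inverse, the map would be an anti-homomorphism for the right action. Property (P1) follows since $\tilde{T}_{e}(x)=x$, and $\Phi(a^{-1})=\Phi(a)^{-1}$ was already obtained in step (i).

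For (iii), if $\tilde{T}_{a}=\tilde{T}_{b}$, then evaluating at $e$ gives $a^{-1}=b^{-1}$, hence $a=b$. Finally, the image of a homomorphism is closed under products and inverses and contains the identity, so $\Phi(\mathcal{G})$ is a subgroup, and $\Phi$ is an isomorphism onto it.
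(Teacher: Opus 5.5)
Your proposal is correct and follows essentially the same route as the paper: show that $\tilde{T}_{a^{-1}}$ is a two-sided inverse of $\tilde{T}_{a}$, check (P1) via $\tilde{T}_{e}=\mathrm{id}$, and verify (P2) through $\tilde{T}_{a\cdot b}(x)=b^{-1}\cdot(a^{-1}\cdot x)=(\tilde{T}_{a}\ast\tilde{T}_{b})(x)$ under the right-action convention. Your explicit injectivity check (evaluating at $e$ to get $a^{-1}=b^{-1}$) is a useful addition, because the paper goes from the homomorphism property directly to ``isomorphic to its image'' without stating injectivity.
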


\begin{proof}
First of all, we show that $\tilde{T}_{a}\in \mathrm{Sym}(\mathcal{G})$,
i.e., it is a bijection from $\mathcal{G}$ to $\mathcal{G}$, for all $a\in 
\mathcal{G}$. This is true because every $b\in \mathcal{G}$ has an inverse
(or anti-image) under $\tilde{T}_{a}$, namely, $(\tilde{T}_{a}(b))^{-1}=%
\tilde{T}_{a^{-1}}(b)$. Indeed,%
\begin{equation*}
(\tilde{T}_{a}\ast \tilde{T}_{a^{-1}})(b)=\tilde{T}_{a^{-1}}(\tilde{T}%
_{a}(b))=a\cdot (a^{-1}\cdot b)=(a\cdot a^{-1})\cdot b=b
\end{equation*}%
for all $b\in \mathcal{G}$. Analogously, $\tilde{T}_{a^{-1}}\ast $ $\tilde{T}%
_{a}=\mathrm{id}.$

To show that $\Phi :a\mapsto \tilde{T}_{a}$ is a group homomorphism from $%
\mathcal{G}$ into $\mathrm{Sym}(\mathcal{G})$, we have to prove that
Properties (P1) and (P2) in Theorem \ref{ThmCayley} hold true.

(P1) Let $e$ be the identity element of $\mathcal{G}$. Then, $\tilde{T}%
_{e}(a)=e^{-1}\cdot a=a$ for all $a\in \mathcal{G}$, which shows that $%
\tilde{T}_{e}:\mathcal{G}\rightarrow \mathcal{G}$ is the identity mapping.

(P2) Furthermore,%
\begin{eqnarray*}
\tilde{T}_{a\cdot b}(c) &=&(a\cdot b)^{-1}\cdot c=(b^{-1}\cdot a^{-1})\cdot
c=b^{-1}\cdot (a^{-1}\cdot c) \\
&=&\tilde{T}_{b}(\tilde{T}_{a}(c))=(\tilde{T}_{a}\ast \tilde{T}_{b})(c)
\end{eqnarray*}%
for all $a,b,c\in \mathcal{G}$.

Hence, $\mathcal{G}$ is isomorphic to its image $\Phi (\mathcal{G})$ under $%
\Phi $.
\end{proof}

\bigskip

The implementation $\Phi :a\mapsto \tilde{T}_{a}$ of Cayley's isomorphism,
Theorem \ref{ThmCayleyViaTranscripts}, can be visualized as follows. Let $%
\{a_{1},a_{2},...,a_{n}\}$ be any ordering of the elements of $\mathcal{G}$,
so that the conjugate transcription mapping $\tilde{T}:\mathcal{G}\times 
\mathcal{G}\rightarrow \mathcal{G}$ can be represented by the $n\times n$
matrix $(\tilde{T}(a_{i},a_{j}))_{1\leq i,j\leq n}$, shortened as $(\tilde{T}%
(i,j))_{1\leq i,j\leq n}=(\tilde{T}_{i}(j))_{1\leq i,j\leq n}$ in the
following table: 
\begin{equation}
\begin{tabular}{|c||c|c|c|c|c|}
\hline
& $1$ & $\cdots $ & $j$ & $\cdots $ & $n$ \\ \hline\hline
$1$ & $\tilde{T}_{1}(1)$ & $\cdots $ & $\tilde{T}_{1}(j)$ & $\cdots $ & $%
\tilde{T}_{1}(n)$ \\ \hline
$\vdots $ & $\vdots $ & $\vdots $ & $\vdots $ & $\vdots $ & $\vdots $ \\ 
\hline
$i$ & $\tilde{T}_{i}(1)$ & $\cdots $ & $\tilde{T}_{i}(j)$ & $\cdots $ & $%
\tilde{T}_{i}(n)$ \\ \hline
$\vdots $ & $\vdots $ & $\vdots $ & $\vdots $ & $\vdots $ & $\vdots $ \\ 
\hline
$n$ & $\tilde{T}_{n}(1)$ & $\cdots $ & $\tilde{T}_{n}(j)$ & $\cdots $ & $%
\tilde{T}_{n}(n)$ \\ \hline
\end{tabular}
\label{MatrixTranscript}
\end{equation}%
Therefore, the $i$th row of the conjugate transcription matrix (\ref%
{MatrixTranscript}) is precisely the one-line form of the permutation $%
\tilde{T}_{i}:=\tilde{T}(i,\cdot )$, $1\leq i\leq n$:%
\begin{equation}
\Phi (a_{i})=\tilde{T}_{a_{i}}\equiv \tilde{T}_{i}=\left( 
\begin{array}{ccccc}
1 & \cdots & j & \cdots & n \\ 
\tilde{T}_{i}(1) & \cdots & \tilde{T}_{i}(j) & \cdots & \tilde{T}_{i}(n)%
\end{array}%
\right) .  \label{Phi(a)}
\end{equation}

\begin{example}
\label{ExampleKlein}
Let us particularize the procedure (\ref{MatrixTranscript})-(\ref{Phi(a)})
to the Klein four-group  $(\mathcal{K},\cdot )$, defined by the
multiplication table%
\begin{equation}
\begin{tabular}{|c||c|c|c|c|}
\hline
$\cdot $ & $e$ & $a$ & $b$ & $c$ \\ \hline\hline
$e$ & $e$ & $a$ & $b$ & $c$ \\ \hline
$a$ & $a$ & $e$ & $c$ & $b$ \\ \hline
$b$ & $b$ & $c$ & $e$ & $a$ \\ \hline
$c$ & $c$ & $b$ & $a$ & $e$ \\ \hline
\end{tabular}
\label{TableK}
\end{equation}
Note that $\mathcal{K}$ is abelian (as any group whose cardinality is the
square of a prime number) since the multiplication table in equation (\ref%
{TableK}) is symmetric and every element other than the identity $e$ has
order $2$, i.e., every element is its own inverse. Therefore, $\tilde{T}%
_{r}(s)=r^{-1}\cdot s=r\cdot s$ for all $r,s\in \mathcal{K}$, so that the
table for the left translations $\tilde{T}_{r}\in \mathrm{Sym}(\mathcal{K})=%
\mathrm{Sym}(4)$ is obtained by just copying the multiplication table (\ref%
{TableK}): 
\begin{equation}
\begin{tabular}{|c||c|c|c|c|}
\hline
& $e$ & $a$ & $b$ & $c$ \\ \hline\hline
$\tilde{T}_{e}$ & $e$ & $a$ & $b$ & $c$ \\ \hline
$\tilde{T}_{a}$ & $a$ & $e$ & $c$ & $b$ \\ \hline
$\tilde{T}_{b}$ & $b$ & $c$ & $e$ & $a$ \\ \hline
$\tilde{T}_{c}$ & $c$ & $b$ & $a$ & $e$ \\ \hline
\end{tabular}
\label{TableK2}
\end{equation}%
Thus, the isomorphic copy $\tilde{T}_{b}$ of $b \in \mathcal{K}$ is the permutation given by the third row of the table
in equation (\ref{TableK}): 
\begin{equation}
\tilde{T}_{b}=\left( 
\begin{tabular}{llll}
$e$ & $a$ & $b$ & $c$ \\ 
$b$ & $c$ & $e$ & $a$%
\end{tabular}%
\right) 
= bcea.  \label{T_b}
\end{equation}%
If the elements of $\mathcal{K}$ are encoded as, say,%
\begin{equation}
e=1,\,a=2,\,b=3,\,c=4,  \label{encoding}
\end{equation}%
%then the permutation $\tilde{T}_{b}:\mathcal{K}\rightarrow \mathcal{K}$ in
%equation (\ref{T_b}) reads%
%\begin{equation}
%  \tilde{T}_{b}\equiv \tilde{T}_{3}=3412  
%\end{equation}
%in the one-line form.
then
\begin{equation}
  \tilde{T}_{1}=1234, \; \tilde{T}_{2}=2143, \;
  \tilde{T}_{3}=3412, \; \tilde{T}_{4}=4321.
  \label{encoding Klein2}
\end{equation}
\end{example}

The concept of transcription translates to time series in a straightforward
way. Let $\mathrm{x}=(x_{t})_{t\geq 0}$ and $\mathrm{y}=(y_{t})_{t\geq 0}$
be two time series and let $\mathrm{\alpha }=(\alpha _{t})_{t\geq 0}$ and $%
\mathrm{\beta }=(\beta _{t})_{t\geq 0}$ be algebraic representations of $%
\mathrm{x}$ and $\mathrm{y}$ over the same group $\mathcal{G}$. Remember
that $\mathrm{\alpha }$ and $\mathrm{\beta }$ are $\mathcal{G}$-valued
random trajectories (Section \ref{sec3}).

\begin{definition}
\label{DefTranscriptTS}Let $T:\mathcal{G}\times \mathcal{G}\rightarrow 
\mathcal{G}$ be the transcription mapping defined in Definition \ref%
{DefTranscriptionMap}. The transcription with source $\mathrm{\alpha }$,
target $\mathrm{\beta }$, and coupling delay $\Lambda \in \mathbb{Z}$ is the 
$\mathcal{G}$-valued time series%
\begin{equation}
\mathrm{T}_{\mathrm{\alpha },\mathrm{\beta }_{\Lambda }}=(T(\alpha
_{t},\beta _{t+\Lambda }))_{t\geq t_{0}}=(\beta _{t+\Lambda }\ast \alpha
_{t}^{-1})_{t\geq t_{0}}  \label{tau(Lambda)}
\end{equation}%
where $t_{0}=\max \{0,-\Lambda \}$.
\end{definition}

Without loss of generality, we apply the coupling delay to the target (see property
(T1) above). If the source and target are known from the context, we will
shorten $\mathrm{T}_{\mathrm{\alpha },\mathrm{\beta }_{\Lambda }}=(\beta
_{t+\Lambda }\ast \alpha _{t}^{-1})_{t\geq t_{0}}$ as%
\begin{equation}
\mathrm{T}_{\mathrm{\alpha },\mathrm{\beta }_{\Lambda }}=(\tau _{t}(\Lambda
))_{t\geq t_{0}},  \label{tau(Lambda)2}
\end{equation}%
with 
\begin{equation}
\tau _{t}(\Lambda ):=\beta _{t+\Lambda }\ast \alpha _{t}^{-1}\in \mathcal{G}.
\label{tau(Lambda)3}
\end{equation}%
If $\Lambda =0$, then we set $\tau _{t}(0)=\tau _{t}=\beta _{t}\ast \alpha
_{t}^{-1}$.

The fact that $t_{0}=\left\vert \Lambda \right\vert >0$ for $\Lambda <0$ is
irrelevant in practice. Alternatively, in that case one could set $\beta
_{-\left\vert \Lambda \right\vert }=\beta _{-\left\vert \Lambda \right\vert
+1}=\ldots =\beta _{-1}:=e$ and start at $t=0$ in Equation (\ref{tau(Lambda)}%
), as we will do from now on for notational simplicity. If $\mathrm{y}=%
\mathrm{x}$ and $\Lambda \neq 0$, then we call $\tau _{t}(\Lambda )=\alpha
_{t+\Lambda }\ast \alpha _{t}^{-1}$, $t\geq 0$, \textit{self-transcripts};
otherwise, if $\mathrm{y}\neq \mathrm{x}$ and $\Lambda \in \mathbb{Z}$, we
speak of \textit{cross-transcripts} or simply transcripts. As its name
indicates, the coupling delay $\Lambda $ allows us to study the dependencies
between different variables of a time series (self-transcripts) or two time
series (cross-transcripts).

Definition \ref{DefTranscriptTS} can be generalized to more than two
algebraic representations over of the same group. Thus, if $\mathrm{\alpha }%
^{m}=(\alpha _{t}^{m})_{t\geq 0}$, $m=1,2,...,M\geq 2$, are $\mathcal{G}$%
-valued representations, then we can define a (self- or cross-)
transcription for any of the $M^{2}$ possible pairings $\mathrm{\alpha }^{j},%
\mathrm{\alpha }^{k}$ of the algebraic representations $\mathrm{\alpha }%
^{1},...,\mathrm{\alpha }^{M}$:%
\begin{equation}
\mathrm{T}_{\mathrm{\alpha }^{j},\mathrm{\alpha }_{\Lambda
(j,k)}^{k}}=(T(\alpha _{t}^{j},\alpha _{t+\Lambda (j,k)}^{k}))_{t\geq 0},
\label{M transcripts}
\end{equation}%
where we allow for coupling delays that depend on the source and the target.
Abusing notation, we will use the notations $\mathrm{\alpha }^{j}$, $\mathrm{%
\alpha }_{\Lambda (j,k)}^{k}$, $T_{\mathrm{\alpha }^{j},\mathrm{\alpha }%
_{\Lambda (j,k)}^{k}}$ and the like for both time series and the $\mathcal{G}
$-valued random processes that generate them.

%%%%%%%%%%%%%%%%%%%%%%%%%%%%%%%%%%%%%%%%%%%%%%%%%%%%5

\section{Transcript-based tools and applications}

\label{sec5}

In this section we review several transcript-based tools in use in time series analysis. The traditional tools include the entropic (Sections \ref{sec51}-\ref{sec54}) and algebraic (\ref{sec53}) ones. The most recent tools are the Cayley and Kendall distances between permutations (\ref{sec61}) and their extensions to general groups (Section \ref{sec62}), which also fall into the category of algebraic tools. The noise robustness of transcript-based tools has been studied by Adams and Lehnertz \cite{Adams2025}. For the information-theoretic concepts below, see the book by Cover and Thomas \cite{Cover2006}.

%%%%%%%%%%%%%%%%%%%%%%%%%%%%%%%%%%%%%%%%%%%%%%%%%%%

\subsection{Entropy}

\label{sec51}

One of the main tools in nonlinear time series analysis is entropy. Let $%
\mathrm{\alpha }=(\alpha _{t})_{t\geq 0}$, $\mathrm{\beta }=(\beta
_{t})_{t\geq 0}$ be two stationary $\mathcal{G}$-valued random processes,
and $T_{\mathrm{\alpha },\mathrm{\beta }_{\Lambda }}=(T_{\alpha _{t},\beta
_{t+\Lambda }})_{t\geq 0}$ the random process defined in equations (\ref%
{tau(Lambda)})-(\ref{tau(Lambda)3})). The \textit{transcript entropy} of the
source process $\mathrm{\alpha }$ and target process $\mathrm{\beta }$ with
coupling delay $\Lambda \in \mathbb{Z}$ is the Shannon entropy of the random
variable $T_{\alpha _{t},\beta _{t+\Lambda }}=\beta _{t+\Lambda }\ast \alpha
_{t}^{-1}$ with probability distribution $P_{\tau }(\Lambda )=\{p(\tau
_{t}(\Lambda )):\tau _{t}(\Lambda )\in \mathcal{G}\}$, i.e., 
\begin{equation}
H(T_{\alpha _{t},\beta _{t+\Lambda }})=H(P_{\tau }(\Lambda ))=-\sum_{\tau
_{t}(\Lambda )\in \mathcal{G}}p(\tau _{t}(\Lambda ))\log p(\tau _{t}(\Lambda
)),  \label{H(P)}
\end{equation}%
where $t$ is fixed but otherwise arbitrary due to stationarity,
\begin{equation}
p(\tau _{t}(\Lambda ))=\sum_{\beta _{t+\Lambda }\ast \alpha _{t}^{-1}=\tau
_{t}(\Lambda )}p(\alpha _{t},\beta _{t+\Lambda })=\sum_{\alpha _{t}\in 
\mathcal{G}}p(\alpha _{t},\tau _{t}(\Lambda )\ast \alpha _{t}),
\label{p(tau)}
\end{equation}%
$p(\alpha _{t},\beta _{t+\Lambda })$ being the joint probability of the
symbols $\alpha _{t}$ and $\beta _{t+\Lambda }$. The usual logarithm bases
are $e$ and $2$.

In the particular case $\mathrm{\beta }=\mathrm{\alpha }$ and $\Lambda \neq
0 $, we speak of the self-transcript entropy $H(T_{\alpha
_{t},\alpha _{t+\Lambda }})$ of the random process $\mathrm{\alpha }$ with
coupling delay $\Lambda $; otherwise we speak of cross-transcript entropy or
simply transcript entropy. By property (T1) of Section \ref{sec4}, $%
H(T_{\alpha _{t},\beta _{t+\Lambda }})=H(T_{\beta _{t+\Lambda },\alpha
_{t}}) $. On the other hand, $H(T_{\alpha _{t},\beta _{t+\Lambda }})\neq
H(T_{\alpha _{t+\Lambda },\beta _{t}})$ in general, unless $\Lambda =0$.

Since $H(T_{\alpha _{t},\beta _{t+\Lambda }})$ ranges between $0$ and $\log \left\vert \mathcal{G}\right\vert $,
the normalized entropy%
\begin{equation}
h(T_{\alpha _{t},\beta _{t+\Lambda }}):=\frac{H(T_{\alpha _{t},\beta
_{t+\Lambda }})}{\log \left\vert \mathcal{G}\right\vert }=\frac{H(P_{\tau
}(\Lambda ))}{\log \left\vert \mathcal{G}\right\vert }=:h(P_{\tau }(\Lambda ))
\label{normalized H}
\end{equation}%
is preferred in applications. If $\Lambda =0$, we write $P_{\tau
}(0)=P_{\tau }=\{p(\tau _{t}):\tau _{t}\in \mathcal{G}\}$.

More generally, the Shannon entropy of a probability distribution of ordinal
patterns is called permutation entropy \cite{Bandt2002,Amigo2013}. Of
course, the Shannon entropy can be replaced by any other entropy in the
equation (\ref{H(P)}), in particular by the R\'{e}nyi or Tsallis entropies 
\cite{Renyi1961,Tsallis1988}, which are one-parameter families of
generalized entropies that include the Shannon entropy.

In the case of drive-response systems, the driver may have forbidden
symbols, as actually happens in the ordinal representations. However, this
does not mean that there will also be forbidden transcripts because, as
shown in Section \ref{sec3}, there are $\left\vert \mathcal{G}\right\vert $
pairs $(\alpha _{t},\beta _{t+\Lambda })$ such that $\beta _{t+\Lambda }\ast
\alpha _{t}^{-1}=\tau _{t}(\Lambda ).$ So, we expect that, in general, $%
p(\tau _{t}(\Lambda ))>0$ for all $\tau _{t}(\Lambda )\in \mathcal{G}$.
There are exceptions, though: full and generalized synchronization. We come
back to this point in Section \ref{sec72}.

Transcript entropy has been used to study the brain activity of a subject
suffering from frontal lobe epilepsy \cite{Amigo2012}.

Differentiation of patients with obstructive sleep apnea from healthy
controls has been achieved based on the heart rate--blood pressure coupling
quantified by entropic indices, including self- and cross-transcript
entropies \cite{Pilarczyk2023}. Remarkably, all 10 best triplets of
classifiers include the self-transcript entropy of the heart-rate time series with different patterns lengths and coupling delays.

Entropy is instrumental for calculating divergences, statistical
complexity, coupling complexity coefficients, and mutual information in the
forthcoming Sections \ref{sec52}-\ref{sec54}. For the estimation of entropy
in time series analysis, see Paninski \cite{Paninski2003}. For practical
considerations of permutation entropy, see Riedl et al. \cite{Riedl2013}.
The statistical properties of permutation entropy are studied in Chagas et
al. \cite{Chagas2022}.

%%%%%%%%%%%%%%%%%%%%%%%%%%%%%%%%%%%%%%%%%%%

\subsection{Divergence and statistical complexity}

\label{sec52}

Again, let $P_{\mathbf{\tau }}=\{p(\tau _{t}):\tau _{t}\in \mathcal{G}\}$ be
a (theoretical or empirical) probability distribution of transcripts
obtained from two stationary $\mathcal{G}$-valued random processes $\mathrm{%
\alpha }=(\alpha _{t})_{t\geq 0}$, $\mathrm{\beta }=(\beta _{t})_{t\geq 0}$
as in equation (\ref{p(tau)}), and $Q^{\mathrm{ref}}=\{q(\tau _{t}):\tau
_{t}\in \mathcal{G}\}$ be a \textquotedblleft reference\textquotedblright\
probability distribution. In case of a coupling delay $\Lambda \neq 0$,
replace $\tau _{t}$ by $\tau _{t}(\Lambda )$.

The \textit{Kullback-Leibler} \textit{(KL)} \textit{divergence} (or relative
entropy) of $P_{\mathbf{\tau }}$ and $Q^{\mathrm{ref}}$, a type of
statistical distance or a measure of similarity between probability
distributions, is defined as%
\begin{equation}
D_{KL}(P_{\mathbf{\tau }}\parallel Q^{\mathrm{ref}})=\sum_{\tau _{t}\in 
\mathcal{G}}p(\tau _{t})\log \frac{p(\tau _{t})}{q(\tau _{t})}.  \label{D_KL}
\end{equation}%
Note that $D_{KL}(P_{\mathbf{\tau }}\parallel Q^{\mathrm{ref}})$ is in
general not symmetric with respect to the exchange of $P_{\mathbf{\tau }}$
and $Q^{\mathrm{ref}}$. Therefore, a symmetrized form $D_{KL}^{\mathrm{sym}%
}(P_{\mathbf{\tau }}\parallel Q^{\mathrm{ref}})$, such as the arithmetic or
harmonic mean of $D_{KL}(P_{\mathbf{\tau }}\parallel Q^{\mathrm{ref}})$ and $%
D_{KL}(Q^{\mathrm{ref}}\parallel P_{\mathbf{\tau }})$, is preferred in practice.

But perhaps the most popular symmetric divergence currently is the \textit{%
Jensen-Shannon (JS) divergence}, which, particularized to the probabilities $%
P_{\mathbf{\tau }}$ and $Q^{\mathrm{ref}}$, is defined as 
\begin{equation}
D_{JS}(P_{\mathbf{\tau }}\parallel Q^{\mathrm{ref}})=H\left( \frac{P_{%
\mathbf{\tau }}+Q^{\mathrm{ref}}}{2}\right) -\frac{1}{2}\left( H(P_{\mathbf{%
\tau }})+H(Q^{\mathrm{ref}})\right) ,  \label{D_JS}
\end{equation}%
where $H(\cdot )$ is \ the Shannon entropy. Among its properties we
highlight: (i) $0\leq D_{JS}(P\parallel Q)\leq \log 2$ for any two
probability distributions $P$ and $Q$ on the same finite state space (so
that the JS divergence ranges in the interval $[0,1]$ if base $2$ logarithms
are used \cite{Lin1991}), (ii) the maximum value is achieved when $P$ and $Q$
have disjoint supports (meaning there is no event where both $P$ and $Q$
assign a non-zero probability), and (iii) $\left( D_{JS}(P\parallel
Q)\right) ^{1/2}$ is formally a distance for probability distributions,
called the Jensen-Shannon distance \cite{Endres2003}.

Several choices are common for $Q^{\mathrm{ref}}$. The simplest one is the
uniform distribution $U$ on $\mathcal{G}$. Another choice is $P_{\tau }^{%
\mathrm{ind}}=\{p_{\mathrm{ind}}(\tau _{t}):\tau _{t}\in \mathcal{G}\}$,
where \cite{Monetti2009} 
\begin{equation}
p_{\mathrm{ind}}(\tau _{t})=\sum_{\beta _{t}\ast \alpha _{t}^{-1}=\tau
_{t}}p(\alpha _{t})p(\beta _{t})  \label{Pind}
\end{equation}%
and $t$ is fixed but otherwise arbitrary due to the assumed stationarity of the time series. In other words, if $P_{\mathrm{\alpha }}=\{p(\alpha _{t}):\alpha _{t}\in 
\mathcal{G}\}$ and $P_{\mathrm{\beta }}=\{p(\beta _{t}):\beta _{t}\in 
\mathcal{G}\}$, then $P_{\tau }^{\mathrm{ind}}$ is the probability
distribution obtained from the product distribution $P_{\mathrm{\alpha }%
}\times P_{\mathrm{\beta }}$ by adding the entries $p(\alpha _{t})p(\beta
_{t})$ such that $\beta _{t}\ast \alpha _{t}^{-1}=\tau _{t}$ for each $\tau
_{t}\in \mathcal{G}$. Therefore, $D_{JS}(P_{\mathbf{\tau }}\parallel P_{\tau
}^{\mathrm{ind}})$ and $D_{JS}(P_{\mathbf{\tau }}\parallel
P_{\tau }^{\mathrm{ind}})^{1/2}$ can be considered nonlinear measures of
statistical dependence between the processes $\mathrm{\alpha }$ and $\mathrm{%
\beta }$.

The product of the JS divergence $D_{JS}(P\parallel U)$ (where $P$ stands
for any probability distribution) times the Shannon entropy $H(P)$ was
introduced by Rosso et al. \cite{Rosso2007} and called the \textit{%
statistical complexity }of the probability distribution $P$,%
\begin{equation}
\mathrm{SC}(P)=D_{JS}(P\parallel U)H(P).  \label{SC_JensenShannon}
\end{equation}%
Usually, definition (\ref{SC_JensenShannon}) is provided with an additional
normalization factor. Of course, if base $2$ logarithms are used (so that $%
0\leq D_{JS}(P\parallel U)\leq 1$) and $H(P)$ is replaced with the
normalized entropy $h(P)=H(P)/\log _{2}\left\vert \mathcal{G}\right\vert $
(equation (\ref{normalized H})), then $0\leq \mathrm{SC}(P)\leq 1$
automatically. By definition, $\mathrm{SC}(P)=0$ when $P$ is uniform ($P=U$)
or when $P$ is deterministic, reaching its maximum between those extreme
situations.

Analogously, we call%
\begin{equation}
\mathrm{SC}(P_{\tau},Q^{\mathrm{ref}})=D^{\mathrm{sym}}(P_{\tau}\parallel Q^{\mathrm{ref}%
})H(P_{\tau})  \label{SC(P)}
\end{equation}%
(possibly with a normalization factor) the \textit{transcript statistical
complexity} of the transcript probability distribution $P_{\tau}$ with respect to $Q^{\mathrm{%
ref}}$, where $D^{\mathrm{sym}}(P_{\mathbf{\tau }}\parallel Q^{\mathrm{ref}%
}) $ is a divergence symmetric in $P_{\tau}$ and $Q^{\mathrm{ref}}$.

Usually, $D^{\mathrm{sym}}(P_{\mathbf{\tau }}\parallel Q^{\mathrm{ref}})$ is
the JS divergence but other choices have been used in the literature, too.
Thus, the harmonic mean%
\begin{equation*}
D_{KL}^{\mathrm{sym}}(P_{\mathbf{\tau }}\parallel P_{\tau }^{\mathrm{ind}})=%
\frac{D_{KL}(P_{\mathbf{\tau }}\parallel P_{\tau }^{\mathrm{ind}%
})D_{KL}(P_{\tau }^{\mathrm{ind}}\parallel P_{\mathbf{\tau }})}{D_{KL}(P_{%
\mathbf{\tau }}\parallel P_{\tau }^{\mathrm{ind}})+D_{KL}(P_{\tau }^{\mathrm{%
ind}}\parallel P_{\mathbf{\tau }})}
\end{equation*}%
has been used \cite{Monetti2009} to characterize the synchronization regimes
of a bidirectionally coupled R\"{o}ssler-R\"{o}ssler system.

Likewise, $\mathrm{SC}_{KL}(P_{\mathbf{\tau }},P_{\tau }^{\mathrm{ind}%
})=D_{KL}^{\mathrm{sym}}(P_{\mathbf{\tau }}\parallel P_{\tau }^{\mathrm{ind}%
})H(P_{\tau })$ has been used \cite{Amigo2012} to measure the complexity of
two bidirectionally coupled R\"{o}ssler oscillators and two delay-coupled
logistic maps.

%%%%%%%%%%%%%%%%%%%%%%%%%%%%%%%%%%%%%%%%%%%%%%%%%%

\subsection{Mutual information and coupling complexity coefficients}

\label{sec54}

Given $M\geq 2$ stationary $\mathcal{G}$-valued random processes $\mathrm{%
\alpha }^{m}=(\alpha _{t}^{m})_{t\geq 0}$, $1\leq m\leq M$, their \textit{coupling
complexity coefficient} (CCC) is defined as \cite{Monetti2013,Monetti2013B}%
\begin{equation}
C(\mathrm{\alpha }^{1},...,\mathrm{\alpha }^{M})=\min_{1\leq m\leq
M}I(\alpha _{t}^{m};T_{\alpha _{t}^{1},\alpha _{t}^{2}},...,T_{\alpha
_{t}^{M-1},\alpha _{t}^{M}}),  \label{CCC}
\end{equation}%
where $T_{\alpha _{t}^{m},\alpha _{t}^{m+1}}$ is the random variable $\tau
_{t}^{m,m+1}=\alpha _{t}^{m+1}\ast (\alpha _{t}^{m})^{-1}$, $1\leq m\leq M-1$%
, and $I(\cdot ;\cdot )$ is the mutual information of its (scalar or
vectorial) arguments. By definition, $C(\mathrm{\alpha }^{1},...,\mathrm{%
\alpha }^{M})\geq 0$. There are other equivalent definitions \cite%
{Monetti2013,Monetti2013B}. For example, in the two-dimensional case,%
\begin{eqnarray*}
C(\mathrm{\alpha }^{1},\mathrm{\alpha }^{2}) &=&\min \{I(\alpha
_{t}^{1};T_{\alpha _{t}^{1},\alpha _{t}^{2}}),I(\alpha _{t}^{2};T_{\alpha
_{t}^{1},\alpha _{t}^{2}})\} \\
&=&\min \{H(\alpha _{t}^{1}),H(\alpha _{t}^{2})\}-H(\alpha _{t}^{1},\alpha
_{t}^{2})+H(T_{\alpha _{t}^{1},\alpha _{t}^{2}}) \\
&=&H(T_{\alpha _{t}^{1},\alpha _{t}^{2}})-\max \{H\left( \alpha
_{t}^{1}\right\vert \alpha _{t}^{2}),H\left( \alpha _{t}^{2}\right\vert
\alpha _{t}^{1})\}.
\end{eqnarray*}

The name CCC is justified by the following two properties \cite{Monetti2013}:

\begin{description}
\item[(i)] If $\mathrm{\alpha }^{1}=\mathrm{\alpha }^{2}=...=\mathrm{\alpha }%
^{M}$, then $C(\mathrm{\alpha }^{1},...,\mathrm{\alpha }^{M})=0$.

\item[(ii)] If $\mathrm{\alpha }^{1},\mathrm{\alpha }^{2},...,\mathrm{\alpha 
}^{M}$ are independent and uniformly distributed, then $C(\mathrm{\alpha }%
^{1},...,\mathrm{\alpha }^{M})=0$.
\end{description}

So, $C(\mathrm{\alpha }^{1},...,\mathrm{\alpha }^{M})>0$ in
\textquotedblleft nontrivial\textquotedblright\ situations. Among other
properties of $C(\mathrm{\alpha }^{1},...,\mathrm{\alpha }%
^{M})$, we mention two more here:

\begin{description}
\item[(iii)] \textit{Invariance} \cite{Monetti2013}. $C(\mathrm{\alpha }^{1},...,\mathrm{\alpha 
}^{M})$ is invariant under permutations of $\mathrm{\alpha }^{1},...,\mathrm{%
\alpha }^{M}$.

\item[(iv)] \textit{Monotonicity} \cite{Monetti2013}. $C(\mathrm{\alpha }^{1},...,\mathrm{%
\alpha }^{k},...,\mathrm{\alpha }^{M})<C(\mathrm{\alpha }^{1},...,\mathrm{%
\alpha }^{k-1},\mathrm{\alpha }^{k+1},...,\mathrm{\alpha }^{M})$ is only
possible when $H(\mathrm{\alpha }^{k})$ is the unique minimum of all the
entropies $H(\mathrm{\alpha }^{m})$, $1\leq m\leq M$; otherwise, $C(\mathrm{%
\alpha }^{1},...,\mathrm{\alpha }^{k},...,\mathrm{\alpha }^{M})\geq C(%
\mathrm{\alpha }^{1},...,\mathrm{\alpha }^{k-1},\mathrm{\alpha }^{k+1},...,%
\mathrm{\alpha }^{M})$.
\end{description}

Furthermore, numerical simulations \cite{Monetti2013B} with ordinal
representations of time series indicate that $C(\mathrm{\alpha }^{1},...,%
\mathrm{\alpha }^{M})$ is monotonically decreasing with respect to the delay
time\ \textrm{T} (see Equation (\ref{ord patt 2})), i.e., $C(\mathrm{\alpha }%
^{1},...,\mathrm{\alpha }^{M})\searrow 0$ when $\mathrm{T}\rightarrow \infty 
$. This property has been applied in the so-called \textit{dimensional
reduction of the conditional mutual information}. In its lowest dimensional
version, this result states that 
\begin{equation}
I(\beta _{t+\Lambda };\alpha _{t}\left\vert \beta _{t}\right) =I(T_{\beta
_{t+\Lambda },\beta _{t}};T_{\alpha _{t},\beta _{t}}),  \label{Dim Red TE}
\end{equation}%
provided that $H(\beta _{t})\leq H(\alpha _{t})$, and $C($\textrm{$\beta $}$%
_{\Lambda },\mathrm{\beta },\mathrm{\alpha })=0$, which, as said above, can
be approximately achieved by fine-tuning the delay time. Here, (i) $\Lambda\geq 1$,  (ii) the conditional mutual
information on the lhs of (\ref{Dim Red TE}) is, by definition, the \textit{%
symbolic transfer entropy} \cite{Staniek2008} from the process $\mathrm{%
\alpha }$ to the process \textrm{$\beta $} with coupling delay $\Lambda $, 
\begin{equation}
I(\beta _{t+\Lambda };\alpha _{t}\left\vert \beta _{t}\right) =:\mathrm{TE}_{%
\mathrm{\alpha }\rightarrow \mathrm{\beta }}(\Lambda ),  \label{TE}
\end{equation}%
and (iii) the mutual information between transcripts on the rhs of (\ref{Dim
Red TE}) is called the \textit{transcript mutual information}
with coupling delay $\Lambda $, 
\begin{equation}
I(T_{\beta _{t+\Lambda },\beta _{t}};T_{\alpha _{t},\beta _{t}})=:\mathrm{TMI%
}_{\mathrm{\alpha }\rightarrow \mathrm{\beta }}(\Lambda ).  \label{TMI}
\end{equation}%
The transfer entropy $\mathrm{TE}_{\mathrm{\alpha }\rightarrow \mathrm{\beta }}(\Lambda )$ is a
measure of the information transfer from $\mathrm{\alpha }$ to \textrm{$%
\beta $} (usually $\Lambda =1$). It is easy to check that $\mathrm{TE}_{%
\mathrm{\alpha }\rightarrow \mathrm{\beta }}(\Lambda )=\mathrm{TMI}_{\mathrm{%
\alpha }\rightarrow \mathrm{\beta }}(\Lambda )=0$ if $\mathrm{\alpha }=%
\mathrm{\beta }$, e.g., when the underlying dynamical systems are fully
synchronized. See Amig\'{o} et al. \cite{Amigo2016} for the higher
dimensional version of equation (\ref{Dim Red TE}).

Among the applications of the CCC and the dimensional reduction (\ref{Dim
Red TE}) of the symbolic transfer entropy via transcripts, let us mention a
few. So, the directionality indicator%
\begin{equation}
\Delta TI_{\mathrm{\alpha }\rightarrow \mathrm{\beta }}=\mathrm{TMI}_{%
\mathrm{\alpha }\rightarrow \mathrm{\beta }}(\Lambda )-\mathrm{TMI}_{\mathrm{%
\beta }\rightarrow \mathrm{\alpha }}(\Lambda )  \label{Delta TI}
\end{equation}%
has been used as an information directionality index between (i) the lateral
geniculate nucleus of the thalamus and the visual cortex infragranular
layers \cite{Amigo2015} and (ii) heart rate and blood pressure with medical
air breathing or oxygen breathing \cite{Amigo2016}, as well as to study the
information directionality between two bidirectionally delay-coupled
logistic maps \cite{Amigo2016}.

Transcript mutual information has been used as a causality index with ice core
atmospheric data, as well as brain data during a visually cued, two-choice
arm reaching task \cite{Hirata2016}.

The coupling complexity coefficient $C(\mathrm{\alpha },\mathrm{\beta })$
and the directionality indicator (\ref{Delta TI}) have been used to study
the dynamics of two, unidirectionally coupled H\'{e}non maps and R\"{o}ssler
oscillators, as well as to characterize brain-wide interactions during
wakefulness and sleep \cite{Adams2025B}.

%%%%%%%%%%%%%%%%%%%%%%%%%%%%%%%%%%%%%%%%%%%%%

\subsection{Order classes}

\label{sec53}

The order (or period) of a symbol $a\in \mathcal{G}$, \textrm{ord}$(a)$, is
the minimum positive integer $m$ such that $a^{m}=e$, the identity of $%
\mathcal{G}$. Three basic properties of \textrm{ord}$(a)$ are the following 
\cite{Herstein1996}:

\begin{description}
\item[(O1)] The only element of $\mathcal{G}$ with order $1$ is the identity 
$e$.

\item[(O2)] \textrm{ord}$(a)$ is a divisor of $\left\vert \mathcal{G}%
\right\vert $ for all $a\in \mathcal{G}$. In symbols: \textrm{ord}$%
(a)|\left\vert \mathcal{G}\right\vert $.

\item[(O3)] \textrm{ord}$(a)=\mathrm{ord}(a^{-1})$ for all $a\in \mathcal{G}$%
.
\end{description}

We define the \textit{order class} $\mathcal{C}_{m}$ as the subset of $%
\mathcal{G}$ that comprises all elements of order $m$,%
\begin{equation}
\mathcal{C}_{m}=\{a\in \mathcal{G}:\mathrm{ord}(a)=m\,\},  \label{C_m}
\end{equation}%
where $m\geq 1$ divides $\left\vert \mathcal{G}\right\vert $. In particular, 
$\mathcal{C}_{1}=\{e\}$ and $\mathcal{C}_{2}=\{a\in \mathcal{G}:a=a^{-1}\}$.

The sets $\mathcal{C}_{m}$ build a partition of $\mathcal{G}$. If $\left\vert \mathcal{G}\right\vert $ is a prime number $p$,
then $\mathcal{G}=\mathcal{C}_{1}\cup \mathcal{C}_{p}$, i.e., $\mathcal{C}%
_{p}=\mathcal{G}\backslash \{e\}$, which means that the group is cyclic
(generated by the powers of any element $a\neq e$, with $a^{0}:=e$).

By properties (T1) in Section \ref{sec4} and (O3) above, the order of the
transcripts $T_{a,b}$ are invariant under the exchange of the source $a$ and
the target $b$, i.e., \textrm{ord}$(T_{a,b})=\,$\textrm{ord}$(T_{b,a})$. The
order of the transcript $T_{a,b}$ was proposed as a measure of dissimilarity
between $a$ and $b$ by Monetti et al. \cite{Monetti2009}. In Remark \ref{Remark order=distance}, equation (\ref{order = distance2}), we will see that \textrm{ord}$(T_{a,b})-1$ is actually a distance when $\mathcal{G}=\textrm{Sym}(3)$. 

In time series analysis of coupled $\mathcal{G}$-valued time series $\mathrm{%
\alpha }=(\alpha _{t})_{t\geq 0}$ and $\mathrm{\beta }=(\beta _{t})_{t\geq
0} $, we are interested in the order classes of their transcription $\mathrm{%
T}_{\mathrm{\alpha },\mathrm{\beta }_{\Lambda }}=(\tau _{t}(\Lambda
))_{t\geq 0}=(\beta _{t+\Lambda }\ast \alpha _{t}^{-1})_{t\geq 0}$. Again,
the order classes of the transcripts are invariant under the exchange of the
source $\mathrm{\alpha }$ and the target $\mathrm{\beta }$.

More interesting from a physical point of view, it was observed by Monetti
et al. \cite{Monetti2009} that the dynamics of coupled nonlinear systems can
lead to the extinction of transcript order classes, a phenomenon called
saturation. As a trivial example, if $\alpha $ and $\beta $ are algebraic
representations of a fully synchronized drive-response system, then $%
\mathcal{C}_{m}=\emptyset $ for all $m\neq 1$ because then $\mathrm{\beta }=%
\mathrm{\alpha }$ and, hence, $\tau _{t}=\alpha _{t}\ast \alpha _{t}^{-1}=e$
for all $t\geq 0$. Non-trivial examples can arise in case of generalized
synchronzation, as we will see in Section \ref{sec72}.

Although any transcript-based entropic measure is in principle sensitive to
saturation in coupled dynamics, there are tools tailored to this end. For
example, the transcript probabilities $p(\tau )$ and $p^{\mathrm{ind}}(\tau
) $ can be lumped by order class. In such a case:

(a) $P_{\mathbf{\tau }}=\{p(\tau _{t}):\tau _{t}\in \mathcal{G}\}$ changes
to $P_{\tau }^{\mathcal{C}}=\{p_{\mathcal{C}_{m}}:m|\left\vert \mathcal{G}%
\right\vert \}$, where 
\begin{equation}
p_{\mathcal{C}_{m}}=\sum\nolimits_{\tau \in \mathcal{C}_{m}}p(\tau ).
\label{p_C_m}
\end{equation}

(b) $P_{\tau }^{\mathrm{ind}}$ changes to $P_{\tau }^{\mathcal{C},\mathrm{ind%
}}=\{p_{\mathcal{C}_{m},\mathrm{ind}}:$ $m|\left\vert \mathcal{G}\right\vert
\}$, where 
\begin{equation}
p_{\mathcal{C}_{m},\mathrm{ind}}=\sum\nolimits_{\tau \in \mathcal{C}_{m}}p_{%
\mathrm{ind}}(\tau )  \label{p_C_m_ind}
\end{equation}%
and $p^{\mathrm{ind}}(\tau )$ is given in equation (\ref{Pind}).

The divergences $D_{KL}^{\mathrm{sym}}(P_{\tau }^{\mathcal{C}}\parallel
P_{\tau }^{\mathcal{C}\text{,}\mathrm{ind}})$ and the probabilities $p_{%
\mathcal{C}_{m}}$ (with ordinal patterns of lengths 6 and 7) have been used 
\cite{Monetti2009} to characterize the synchronization regimes of a
bidirectionally coupled R\"{o}ssler-R\"{o}ssler system. A similar study has
also been performed \cite{Bunk2013}\ with the frequencies of order classes and
patterns of length 6.

Likewise, the divergence $D_{JS}\left( P_{\tau }^{\mathcal{C}}\right\Vert
P_{\tau }^{\mathcal{C},\mathrm{ind}})$ and the probabilities $p_{\mathcal{C}%
_{m}}$ (with ordinal patterns of length 4) have been used \cite{Adams2025B}
to study the dynamics of two, unidirectionally coupled H\'{e}non maps and R%
\"{o}ssler oscillators, as well as to characterize brain-wide interactions
during wakefulness and sleep.

%%%%%%%%%%%%%%%%%%%%%%%%%%%%%%%%%%%%%%%%%%%%%%

\subsection{Distances in the symmetric groups}

\label{sec61}

An edit distance between two strings of symbols is defined as the minimum
number of allowed edit operations (insertions, deletions, substitutions,
transpositions) to transform one string into the other. Since permutations,
written in one-line form, are symbolic strings without repeated symbols, any
suitable edit distance can be used to measure the distance between them, as
proposed by S\"{o}rensen \cite{Sorensen2007}.

The perhaps most important edit distances for the symmetric groups $\mathrm{%
Sym}(L)$ are the Cayley distance \cite{Nguyen2024} and the Kendall (tau-)
distance \cite{Kendall1938}, where the allowed operations are transpositions
and adjacent transpositions of symbols, respectively. Specifically,
transpositions are cycles of length 2, i.e., a transposition $(ij)$ is a
permutation $\mathbf{t}_{ij}\in \mathrm{Sym}(L)$ such that $\mathbf{t}%
_{ij}(i)=j$, $\mathbf{t}_{ij}(j)=i$, and $\mathbf{t}_{ij}(k)=k$ for all $%
k\neq i,j$. If $\mathbf{r}=(r_{1},...,r_{L})$, then%
\begin{equation}
\mathbf{t}_{ij}\ast \mathbf{r}%
=(r_{1},...,r_{i-1},r_{j},r_{i+1},...,r_{j-1},r_{i},r_{j+1},...r_{L}).
\label{transposition}
\end{equation}%
If $\left\vert i-j\right\vert =1$, then $\mathbf{t}_{ij}$ is called an 
\textit{adjacent transposition}. Unlike factorization of permutations
into disjoint cycles, the factorization of permutations into adjacent
transpositions (and, hence, into transpositions) is not unique, although the
minimal number of factors is. For example, $321=(12)(23)(12)=(23)(12)(23)$.

\begin{definition}
\label{DefDistances}Let $\mathbf{r},\mathbf{s}\in \mathrm{Sym}(L)$. The 
\emph{Cayley} (resp. Kendall) distance between $\mathbf{r}$ and $%
\mathbf{s}$, denoted by $d_{C}(\mathbf{r},\mathbf{s})$ 
(resp. $d_{K}(\mathbf{r},\mathbf{s})$), is defined as the
minimum number of transpositions (resp. adjacent transpositions) 
needed to transform $\mathbf{r}$ into $\mathbf{s}$.
\end{definition}

The proof that $d_{C}$ and $d_{K}$ are distances in the axiomatic sense is
straightforward and can be found (using graph-theoretical arguments for the
triangular inequality) in Amig\'{o} and Dale \cite{Amigo2025}.

Figure \ref{figure1} shows the adjacency graph of $\mathrm{Sym}(4)$, i.e.,
each node corresponds to a permutation $\mathbf{r}\in \mathrm{Sym}(4)$ and
two nodes are connected by a link if they differ by an adjacent
transposition. So the distance $d_{K}(\mathbf{r},\mathbf{s})$ is the length
(number of links) of the shortest path between $\mathbf{r}$ and $\mathbf{s}$%
. For a 3D rendering of the adjacency group of $\mathrm{Sym}(4)$, see the
permutohedron \cite{Permutohedron}. The adjacency graph of $\mathrm{Sym}(3)$
is just a cycle \cite{Amigo2025}.

\begin{figure*}[tbh]
\begin{center}
\includegraphics[width=120mm]{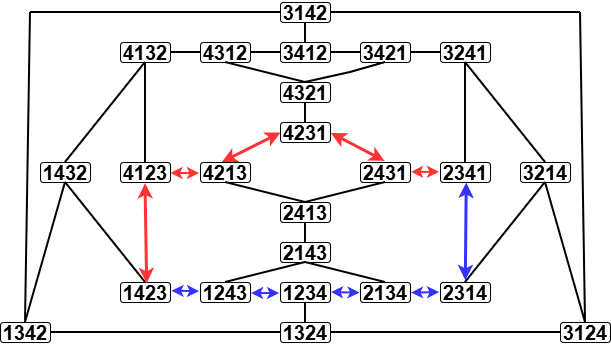}
\end{center}
\caption{Adjacency graph of $\mathrm{Sym}(4)$. A link between two
permutations means that the Kendall distance (minimum number of adjacent
transpositions that transforms one of then into the other) is 1. The figure
highlights two (out of five) shortest paths between the permutations 1423
and 2341, marked with bidirectional arrows (in blue and red online), showing
that $d_{K}(1423,2341)=5$.}
\label{figure1}
\end{figure*}

The distances $d_{C}(\mathbf{r},\mathbf{s})$ and $d_{K}(\mathbf{r},\mathbf{s}%
)$ can be calculated using the following formulas, both involving the
transcript $T_{\mathbf{r},\mathbf{s}}=\mathbf{s}\ast \mathbf{r}^{-1}$. In
view of Definition \ref{DefDistances}, it is not surprising that the
transcript $T_{\mathbf{r},\mathbf{s}}$ is related to $d_{C}(\mathbf{r},%
\mathbf{s})$ and $d_{K}(\mathbf{r},\mathbf{s})$: By equation (\ref{T(a,b)}), 
$T_{\mathbf{r},\mathbf{s}}\ast \mathbf{r}=\mathbf{s}$, so $T_{\mathbf{r},%
\mathbf{s}}$ is the element of $\mathcal{G}$ that transforms $\mathbf{r}$
into $\mathbf{s}$.

\begin{proposition}
\label{PropComput dC dK}(a) Let $\mathbf{u}=(u_{1},...,u_{L})\in \mathrm{Sym}%
(L)$ and $C(\mathbf{u})$ be the number of cycles (including $1$-cycles) in
the cycle factorization of the permutation $\mathbf{u}$. Then,%
\begin{equation}
d_{C}(\mathbf{r},\mathbf{s})=L-C(\mathbf{s}\ast \mathbf{r}^{-1})=L-C(T_{%
\mathbf{r},\mathbf{s}})  \label{Comput d_C}
\end{equation}%
for all $\mathbf{r},\mathbf{s}\in \mathrm{Sym}(L).$

(b) Let $I(\mathbf{u})$ be the number of inversions in the permutation $%
\mathbf{u}$, i.e., the number of ordered pairs $(u_{i},u_{j})$, $1\leq
i<j\leq L$, such that $u_{i}>u_{j}$. Then%
\begin{equation}
d_{K}(\mathbf{r},\mathbf{s})=I(\mathbf{s}\ast \mathbf{r}^{-1})=I(T_{\mathbf{r%
},\mathbf{s}})  \label{Comput d_K}
\end{equation}%
for all $\mathbf{r},\mathbf{s}\in \mathrm{Sym}(L)$.
\end{proposition}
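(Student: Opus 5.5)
The plan is to reduce both formulas to a statement about a single permutation, namely the transcript $\mathbf{u}=T_{\mathbf{r},\mathbf{s}}=\mathbf{s}\ast\mathbf{r}^{-1}$, and then prove a ``length'' formula for $\mathbf{u}$ with respect to each generating set.

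\textbf{Reduction to the transcript.} By equation (\ref{transposition}), left multiplication $\mathbf{t}\ast\mathbf{r}$ by a transposition swaps two positions in the one-line form of $\mathbf{r}$. Hence transforming $\mathbf{r}$ into $\mathbf{s}$ by $m$ such edits means
\begin{equation*}
\mathbf{s}=\mathbf{t}_{m}\ast\cdots\ast\mathbf{t}_{1}\ast\mathbf{r}.
\end{equation*}
This holds if and only if $\mathbf{u}=\mathbf{t}_{m}\ast\cdots\ast\mathbf{t}_{1}$, since $T_{\mathbf{r},\mathbf{s}}\ast\mathbf{r}=\mathbf{s}$ and the equivalence property (T4) applies. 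Therefore $d_{C}(\mathbf{r},\mathbf{s})$ (resp. $d_{K}(\mathbf{r},\mathbf{s})$) equals the minimal number of transpositions (resp. adjacent transpositions) whose $\ast$-product is $\mathbf{u}$. Equivalently, it is the minimal number of such edits transforming $\mathrm{id}$ into $\mathbf{u}$, or $\mathbf{u}$ into $\mathrm{id}$, using $d(\mathbf{u},\mathrm{id})=d(\mathrm{id},\mathbf{u})$ via inverses. In both parts it thus suffices to show that this minimal length equals $L-C(\mathbf{u})$, respectively $I(\mathbf{u})$. Each part uses the same two-step scheme: a monotone invariant that changes by at most one per edit (lower bound), and an explicit algorithm (upper bound).

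\textbf{Part (a).} The key lemma is that for any transposition $\mathbf{t}$ and any $\mathbf{u}$, $C(\mathbf{t}\ast\mathbf{u})=C(\mathbf{u})\pm1$.
\begin{itemize}
\item If the two swapped points lie in the same cycle of $\mathbf{u}$, that cycle splits into two.
\item If they lie in different cycles, those two cycles merge into one.
\end{itemize}
I will verify this by writing the relevant cycle explicitly as $(a\,x_{1}\ldots x_{p}\,b\,y_{1}\ldots y_{q})$ and composing with $(ab)$. Here the order of composition ($\ast$ versus $\circ$) does not affect the $\pm1$ conclusion.

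Given the lemma, the lower bound follows because $\mathrm{id}$ has $L$ cycles, so reaching $\mathbf{u}$ requires at least $L-C(\mathbf{u})$ transpositions. For the upper bound, a $k$-cycle $(c_{1}\,c_{2}\ldots c_{k})$ is a product of $k-1$ transpositions, for instance $(c_{1}c_{k})\cdots(c_{1}c_{2})$ in a suitable order. Summing over the disjoint cycles of $\mathbf{u}$ gives exactly $\sum(k_{i}-1)=L-C(\mathbf{u})$ transpositions.

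\textbf{Part (b).} Let $\mathbf{t}=\mathbf{t}_{i,i+1}$ be an adjacent transposition. Then $\mathbf{t}\ast\mathbf{u}$ swaps the adjacent entries $u_{i},u_{i+1}$. This changes the relative order of that pair only, and leaves every other pair's inversion status unchanged: a pair involving one of these positions and a third position $k$ keeps the same set of values on the same side of $k$. Hence $I(\mathbf{t}\ast\mathbf{u})=I(\mathbf{u})\pm1$. Since $I(\mathrm{id})=0$, at least $I(\mathbf{u})$ adjacent transpositions are needed. Bubble sort achieves this bound: while $\mathbf{u}\neq\mathrm{id}$ there is an adjacent descent $u_{i}>u_{i+1}$, and swapping it lowers $I$ by exactly one. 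So $\mathbf{u}$ reaches $\mathrm{id}$ in exactly $I(\mathbf{u})$ steps, and reversing the steps builds $\mathbf{u}$ from $\mathrm{id}$.

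\textbf{Main obstacle.} The only delicate step is the convention bookkeeping in the reduction. One must check that the paper's right action makes the edits act on positions and correspond to left factors of $\mathbf{u}=\mathbf{s}\ast\mathbf{r}^{-1}$, rather than to $\mathbf{r}^{-1}\ast\mathbf{s}$. With a mismatched convention, part (a) would still hold, because cycle type is conjugation invariant. Part (b), however, could fail, since $I$ is not conjugation invariant. The careful case analysis of the cycle split/merge lemma is the other, more routine, technical point.
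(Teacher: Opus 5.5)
Your proof is correct. Note that the paper does not actually prove this proposition. It only motivates it with the remark that $T_{\mathbf{r},\mathbf{s}}\ast\mathbf{r}=\mathbf{s}$ and treats the cycle-count and inversion formulas as known. It then uses them to \emph{derive} right-invariance, $d_{C,K}(\mathbf{r},\mathbf{s})=d_{C,K}(\mathbf{e},T_{\mathbf{r},\mathbf{s}})$, which it lists as one of its contributions.

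Your route runs in the opposite, more natural direction. You first obtain right-invariance directly from the definition and the position-swapping formula for $\mathbf{t}_{ij}\ast\mathbf{r}$, so that an edit sequence from $\mathbf{r}$ to $\mathbf{s}$ corresponds exactly to a factorization $T_{\mathbf{r},\mathbf{s}}=\mathbf{t}_m\ast\cdots\ast\mathbf{t}_1$. You then compute the two word lengths by the standard pair of arguments. The lower bound comes from a $\pm 1$ invariant: the cycle split/merge lemma in (a), and the flip of a single adjacent pair's inversion status in (b). The upper bound comes from an explicit factorization: the cycle decomposition in (a), and bubble sort in (b). This turns the paper's heuristic remark into a proof.

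It also pins down why (b) needs $\mathbf{s}\ast\mathbf{r}^{-1}$ rather than $\mathbf{r}^{-1}\ast\mathbf{s}$, and your worry there is well founded. For $\mathbf{r}=213$, $\mathbf{s}=231$ the paper's table gives $d_K(\mathbf{r},\mathbf{s})=1$. Indeed $I(\mathbf{s}\ast\mathbf{r}^{-1})=I(132)=1$, whereas $I(\mathbf{r}^{-1}\ast\mathbf{s})=I(321)=3$. Both $132$ and $321$ have two cycles, so (a) is insensitive to the choice, as you predicted from conjugation invariance. Your resolution is right because the paper's Kendall edits swap adjacent \emph{positions}, i.e.\ act by left $\ast$-multiplication, not adjacent values.
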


Of course, $d_{C}(\mathbf{r},\mathbf{s})=d_{C}(\mathbf{s},\mathbf{r})$
amounts to $C(T_{\mathbf{r},\mathbf{s}})=C(T_{\mathbf{s},\mathbf{r}})$, and $%
d_{K}(\mathbf{r},\mathbf{s})=d_{K}(\mathbf{s},\mathbf{r})$ to $I(T_{\mathbf{r%
},\mathbf{s}})=I(T_{\mathbf{s},\mathbf{r}})$. By definition,%
\begin{equation}
d_{C}(\mathbf{r},\mathbf{s})\leq d_{K}(\mathbf{r},\mathbf{s}).
\label{d_C <= d_K}
\end{equation}
Furthermore, from equations (\ref{Comput d_C}) and (\ref{Comput d_K}) it
follows 
\begin{equation}
d_{C}(\mathbf{r},\mathbf{s})\in \{0,1,...,d_{C,\max }:=L-1\},
\label{Range d_C}
\end{equation}%
and 
\begin{equation}
d_{K}(\mathbf{r},\mathbf{s})\in \left\{ 0,1,...,d_{K,\max }:=\frac{L(L-1)}{2}%
\right\} ,  \label{Range d_K}
\end{equation}%
Therefore, $d_{K,\max
}>d_{C,\max }$, except for $L=2$, which generally makes $d_{K}$ a better
choice\ in applications.

An important property of $d_{C}$ and $d_{K}$ is \textit{right-invariance%
}: Since 
\begin{equation}
T_{\mathbf{e},\mathbf{s}\ast \mathbf{r}^{-1}}=\mathbf{s}\ast \mathbf{r}%
^{-1}=T_{\mathbf{r},\mathbf{s}},  \label{T=T}
\end{equation}%
we derive from equation (\ref{Comput d_C})-(\ref{Comput d_K}) that%
\begin{equation}
d_{C,K}(\mathbf{r},\mathbf{s})=d_{C,K}(\mathbf{e},\mathbf{s}\ast \mathbf{r}%
^{-1}),  \label{right invariance}
\end{equation}%
where $\mathbf{e}=12\ldots L$ is the identity permutation and, here and
below, we use the shorthand $d_{C,K}$ to refer to both the Cayley and
Kendall distances. Define the usual norm%
\begin{equation}
\left\Vert \mathbf{r}\right\Vert _{C,K}:=d_{C,K}(\mathbf{e},\mathbf{r})
\label{norm}
\end{equation}%
in the metric spaces $(\mathrm{Sym}(L),d_{C,K})$ to write equation (\ref%
{right invariance}) in the appealing form%
\begin{equation}
d_{C,K}(\mathbf{r},\mathbf{s})=\left\Vert T_{\mathbf{r},\mathbf{s}%
}\right\Vert_{C,K} .  \label{norm2}
\end{equation}

According to equation (\ref{right invariance}), all possible distances $%
d_{C,K}(\mathbf{r},\mathbf{s})$ appear in the $\mathbf{e}$-row (i.e., $%
(d_{C,K}(\mathbf{e},\,\mathbf{u}))_{\mathbf{u}\in \mathrm{Sym}(L)}$) of the
distance matrix $(d_{C,K}(\mathbf{r},\mathbf{s}))_{\mathbf{r,s}\in \mathrm{%
Sym}(L)}$.

\begin{example}
\label{ExampleSym(3)}The Cayley and Kendall distance matrices for the group $%
\mathrm{Sym}(3)$, equation (\ref{mult table Sym(3)B}), are the following: 
\begin{equation}
\begin{tabular}{|c||c|c|c|c|c|c|}
\hline
$d_{C}(\mathbf{r},\mathbf{s})$ & $123$ & $132$ & $213$ & $231$ & $312$ & $%
321 $ \\ \hline\hline
$123$ & $0$ & $1$ & $1$ & $2$ & $2$ & $1$ \\ \hline
$132$ & $1$ & $0$ & $2$ & $1$ & $1$ & $2$ \\ \hline
$213$ & $1$ & $2$ & $0$ & $1$ & $1$ & $2$ \\ \hline
$231$ & $2$ & $1$ & $1$ & $0$ & $2$ & $1$ \\ \hline
$312$ & $2$ & $1$ & $1$ & $2$ & $0$ & $1$ \\ \hline
$321$ & $1$ & $2$ & $2$ & $1$ & $1$ & $0$ \\ \hline
\end{tabular}
\label{d_C(Sym(3))}
\end{equation}%
and 
\begin{equation}
\begin{tabular}{|c||c|c|c|c|c|c|}
\hline
$d_{K}(\mathbf{r},\mathbf{s})$ & $123$ & $132$ & $213$ & $231$ & $312$ & $%
321 $ \\ \hline\hline
$123$ & $0$ & $1$ & $1$ & $2$ & $2$ & $3$ \\ \hline
$132$ & $1$ & $0$ & $2$ & $3$ & $1$ & $2$ \\ \hline
$213$ & $1$ & $2$ & $0$ & $1$ & $3$ & $2$ \\ \hline
$231$ & $2$ & $3$ & $1$ & $0$ & $2$ & $1$ \\ \hline
$312$ & $2$ & $1$ & $3$ & $2$ & $0$ & $1$ \\ \hline
$321$ & $3$ & $2$ & $2$ & $1$ & $1$ & $0$ \\ \hline
\end{tabular}
\label{d_K(Sym(3))}
\end{equation}%
As shown in equations (\ref{d_C <= d_K})-(\ref{Range d_K}), $d_{C}(\mathbf{r}%
,\mathbf{s})\leq d_{K}(\mathbf{r},\mathbf{s})$ for all $\mathbf{r},\mathbf{%
s\in }\mathrm{Sym}(3)$, $d_{C}(\mathbf{r},\mathbf{s})\in \{0,1,2\}$, and $%
d_{K}(\mathbf{r},\mathbf{s})\in \{0,1,2,3\}$. All these possible distances
appear in the first row (row $123$) of the corresponding distance matrix.
The distance matrices for $\mathrm{Sym}(4)$ can be found in the Appendix of
Amig\'{o} and Dale \cite{Amigo2025}.
\end{example}

%\begin{remark}
%In Section \ref{sec72} we will use the fact that $d_{C}%(\mathbf{r},\mathbf{s})=\mathrm{ord}(T_{\mathbf{r},\mathbf{s}})-1$ for $%
%\mathrm{Sym}(3)$, i.e., the order of the transcript (Section \ref{sec53}) between two ordinal 3-patterns minus one is their Cayley distance (equation (\ref{d_C(Sym(3))}))
%\end{remark}

Empirical probability distributions of the distances
\begin{equation}
\mathrm{dist}(\mathbf{r%
}_{t},\mathbf{s}_{t}):=d_{C,K}(\mathbf{r}_{t},\mathbf{s}_{t})_{1\leq t\leq
N-L+1}
\label{W=1}
\end{equation} 
have been used \cite{Amigo2025} to characterize generalized
synchronization in a drive-response system composed of two H\'{e}%
non maps, which has two different synchronization regimes: weak and strong.
To this end, the first components of the driver and responder were
discretized with ordinal patterns of lengths $L=4,5$, providing the time
series $\alpha$=$(\mathbf{r}_{t})_{1\leq t\leq N}$ and $\beta$=$(\mathbf{s}_{t})_{1\leq t\leq
N}$. The results showed that weak synchronization characterizes by
forbidden short distances, while strong synchronization does by forbidden long distances. Since $d_{K}$ has a greater range than $d_{C}$ for the same $L\ge 3$, the former has greater differentiating power
than the latter.

More generally, sliding windows $\mathbf{r}_{t}^{W}=(\mathbf{r}_{t},\mathbf{r%
}_{t+1},...,\mathbf{r}_{t+W-1})$ and $\mathbf{s}_{t}^{W}=(\mathbf{s}_{t},%
\mathbf{s}_{t+1},...,\mathbf{s}_{t+W-1})$ of size $W\geq 2$ have also been
used \cite{Amigo2025} for characterizing generalized synchronization in the above 
drive-response system, their similarity being measured by the $l_{p}$-distances

\begin{equation}
\mathrm{dist}_{p}(\mathbf{r}_{t}^{W},\mathbf{s}_{t}^{W})=\left(
\sum_{k=0}^{W-1}\mathrm{dist}(\mathbf{r}_{t+k},\mathbf{s}_{t+k})^{p}\right)
^{1/p}  \label{l_p dist}
\end{equation}%
for $1\leq p<\infty $, and%
\begin{equation}
\mathrm{dist}_{\infty }(\mathbf{r}_{t}^{W},\mathbf{s}_{t}^{W})=\max \left\{ 
\mathrm{dist}(\mathbf{r}_{t+k},\mathbf{s}_{t+k}):0\leq k\leq W-1\right\} .
\label{Chebyshev dist}
\end{equation}%
In this case, one ends up with an integer (for $p=1,\infty $) or real (for $%
1<p<\infty $) sequence%
\begin{equation}
(\mathrm{dist}_{p}(\mathbf{r}_{t}^{W},\mathbf{s}_{t}^{W}))_{1\leq t\leq
N-W+1}  \label{dist_p TS}
\end{equation}%
that contains metric information about the time series $(\mathbf{r}_{t})$
and $(\mathbf{s}_{t})$. Distances $\mathrm{dist}_{p}$ with smaller
parameters $p$ (say, the Manhattan distance $\mathrm{dist}_{1}$ and the
Euclidean distance $\mathrm{dist}_{2}$) have greater differentiating
power due to the monotony property of the $p$-norms ($\left\Vert \cdot
\right\Vert _{p}\geq \left\Vert \cdot \right\Vert _{p^{\prime }}$ for $1\leq
p\leq $ $p^{\prime }\leq \infty $). The results supported the results
obtained with simultaneous ordinal patterns ($W=1$), equation (\ref{W=1}).

%%%%%%%%%%%%%%%%%%%%%%%%%%%%%%%%%%%%%%%%%

\subsection{Distances in general groups via permutations}

\label{sec62}

Given a group $\mathcal{G}$, a finite set $S=\{s_{1},...,s_{n}\}\subset 
\mathcal{G}$ is a \textit{generating set} (or generator) of $\mathcal{G}$ if
every $a\in \mathcal{G}$ can be written as a finite product of elements of $%
S $ and their inverses. The distance (or \textit{word metric}) $d_{S}(a,b)$
between two elements $a,b\in \mathcal{G}$ is defined \cite{Herstein1996} as
the minimum number of elements from the generating set $S$ needed to
transform $a$ into $b$. In particular, if $\mathcal{G}=$ $\mathrm{Sym}(L)$,
then the Cayley distance $d_{C}(\mathbf{r},\mathbf{s})$ is the distance $%
d_{S}$ with respect to the generating set of all transpositions, while the
Kendall distance $d_{K}(\mathbf{r},\mathbf{s})$ is the distance $d_{S}$ with
respect to the generating set of all adjacent transpositions.

For groups $\mathcal{G}$ other than $\mathrm{Sym}(L)$, a different approach
was proposed in Amig\'{o} and Dale \cite{Amigo2025} which dispenses with
generating sets and, hence, with the search for minimal factorizations of
transcripts using generating elements. This method is based on Cayley's
Theorem (Theorem \ref{ThmCayley}), which allows embedding any group $(%
\mathcal{G},\cdot )$ into the symmetric group $\mathrm{Sym}(\mathcal{G})$.

\begin{definition}
\label{DefDist(a,b)}Let $\Phi :\mathcal{G}\rightarrow \mathrm{Sym}(\mathcal{G%
})$ be the Cayley embedding for a finite group $(\mathcal{G},\cdot )$. Then, 
$D_{C,K}^{(\Phi )}$ is the distance in $\mathcal{G}$ defined as 
\begin{equation}
D_{C,K}^{(\Phi )}(a,b)=d_{C,K}(\Phi (a),\Phi (b))  \label{dist(a,b)}
\end{equation}%
for all $a,b\in \mathcal{G}$.
\end{definition}

Hence, $D_{C,K}^{(\Phi )}$ has the same properties as $d_{C,K}$. In particular:

\begin{itemize}
\item For $a,b\in \mathcal{G}$,%
\begin{equation}
D_{C}^{(\Phi )}(a,b)\leq D_{K}^{(\Phi )}(a,b),  \label{D_C<=D_K}
\end{equation}%
where 
\begin{equation}
D_{C}^{(\Phi )}(a,b)\in \left\{ 0,1,...,D_{C,\max }^{(\Phi )}:=\left\vert 
\mathcal{G}\right\vert -1\right\} ,\;  \label{DC max}
\end{equation}%
and 
\begin{equation}
D_{K}^{(\Phi )}(a,b)\in \left\{ 0,1,...,D_{K,\max }^{(\Phi )}:=\frac{%
\left\vert \mathcal{G}\right\vert (\left\vert \mathcal{G}\right\vert -1)}{2}%
\right\} .  \label{DK max}
\end{equation}

\item If $e$ is the identity of $\mathcal{G}$, then 
\begin{equation}
D_{C,K}^{(\Phi )}(a,b)=D_{C,K}^{(\Phi )}(e,T_{a,b}).  \label{Left inv D 2}
\end{equation}
\end{itemize}

%\begin{remark}
In the case $\mathcal{G}=\mathrm{Sym}(L)$, Section \ref{sec61}, the
distances $d_{C,K}(\mathbf{r},\mathbf{s})$ take on all integer values
ranging from $0$ to their respective maxima: $d_{C,\max }=L-1$ (equation (%
\ref{Range d_C})), and $d_{K,\max }=L(L-1)/2$ (equation (\ref{Range d_K})).
However, this does not happen with $D_{C,K}^{(\Phi )}(a,b)$ because $\Phi (%
\mathcal{G})$ is a subgroup of cardinality $\left\vert \mathcal{G}%
\right\vert $ of the group $\mathrm{Sym}(\mathcal{G})$, whose cardinality is 
$\left\vert \mathcal{G}\right\vert !$, so not all possible distances can be
realized (unless $\left\vert \mathcal{G}\right\vert =2$). We call
\textquotedblleft the gaps in $D_{C,K}^{(\Phi )}$%
\textquotedblright\ the values in $\{0,1,...,D_{C,K,\max }^{(\Phi )}\}$ missing
from the distance matrix $(D_{C,K}^{(\Phi )}(a,b))_{a,b\in 
\mathcal{G}}$; otherwise, they are called allowed or admissible distances.
By equation (\ref{Left inv D 2}), the admissible distances for $%
D_{C,K}^{(\Phi )}$ can be read in the row $(D_{C,K}^{(\Phi )}(e,c))_{c\in 
\mathcal{G}}$ of the distance matrix. Hence, the maximum number of
admissible distances for $D_{C,K}^{(\Phi )}$ is $\left\vert \mathcal{G}%
\right\vert $.
%\end{remark}

According to Theorem \ref{ThmCayleyViaTranscripts}, the Cayley embedding $%
\Phi :\mathcal{G}\rightarrow \mathrm{Sym}(\mathcal{G})$ can be implemented
via the left translations $a\mapsto \tilde{T}_{a}:=\tilde{T}(a,\cdot )$,
where $\tilde{T}(a,b)=a^{-1}\cdot b$ is the conjugate transcription mapping, equation (\ref{T(a,b) conjugate}). In this case, we write $%
D_{C,K}^{(\tilde{T})}$ instead of $D_{C,K}^{(\Phi )}$ to specify that the
embedding $\Phi $ is implemented by conjugate transcripts.

\begin{example}
\label{Example Dist for Klein group} Consider again the Klein four-group $\mathcal{K}$, Example \ref{ExampleKlein}, and the Cayley isomorphism  $\Phi(r)=\tilde{T}_{r}:\mathcal{K} \to \textrm{Sym}(4)$. To calculate the Kendall distances $D_{K}^{(\tilde{T})}(r,s)$ $=d_{K}(\tilde{T}_{r},\tilde{T}_{s})$, we are going to take advantage of the adjacency graph of the embedding symmetric group $\mathrm{Sym}(4)$, Figure \ref{figure1}. To this end, we use the encoding (\ref{encoding}), locate the isomorphic copies 
\begin{equation*}
  \tilde{T}_{e}=1234, \; \tilde{T}_{a}=2143, \;
  \tilde{T}_{b}=3412, \; \tilde{T}_{c}=4321,
  \label{encoding Klein}
\end{equation*}
(see equation (\ref{encoding Klein2})) in Figure \ref{figure1},
%\begin{equation}
% \tilde{T}_{e}=1234, \; \tilde{T}_{a}=2143, \; \tilde{T}_{b}=3412, \; %\tilde{T}_{c}=4321 
%\end{equation}
%Figure \ref{figure1} to calculate  To this end, encode the
%elements $e,a,b,c$ as $1,2,3,4$, respectively, and locate their isomorphic
%copies $\tilde{T}_{e}=1234$, $\tilde{T}_{a}=2143$, $\tilde{T}_{b}=3412$, and 
%$\tilde{T}_{c}=4321$ in the adjacency graph of the embedding symmetric group 
%$\mathrm{Sym}(4)$, Figure \ref{figure1}, and read there the distance $d_{K}(%
%\tilde{T}_{r},\tilde{T}_{s})$ between any two copies $\tilde{T}_{r}$, $%
%\tilde{T}_{s}$ of $r,s\in \mathcal{K}$. 
and read there the distances between them. The result is%
\begin{equation}
\begin{tabular}{|c||c|c|c|c|}
\hline
$D_{K}^{(\tilde{T})}$ & $e$ & $a$ & $b$ & $c$ \\ \hline\hline
$e$ & $0$ & $2$ & $4$ & $6$ \\ \hline
$a$ & $2$ & $0$ & $6$ & $4$ \\ \hline
$b$ & $4$ & $6$ & $0$ & $2$ \\ \hline
$c$ & $6$ & $4$ & $2$ & $0$ \\ \hline
\end{tabular}
\label{D_K for K}
\end{equation}%
As noted before, all allowed distances can be read in
the row corresponding to $\Phi (e)=\tilde{T}_{e}$ of the distance matrix.
Since $D_{K}^{(\tilde{T})}(r,s)\in \{0,1,...,6\}$ according to equation (\ref%
{DK max}), only the even values (including $0$) are allowed for $D_{K}^{(%
\tilde{T})}$. It is interesting that the nodes $1234$, $2143$, $3412$, and $%
4321$ are located on the central axis of the $\mathrm{Sym}(4)$ adjacency
graph, symmetrically positioned with respect to the center of the graph.
\end{example}

%%%%%%%%%%%%%%%%%%%%%%%%%%%%%%%%%%%%%%%%%%%%%%%%%%

\section{Numerical simulations}

\label{sec7}

In this section we are going to test numerically the performance of some representative tools from Section \ref{sec5} in generalized synchronization detection. To
this end we resort to the same model used in Amig\'{o} and Dale \cite%
{Amigo2025} where, instead, the probability distributions of some Kendall and $l_{p}$-distances were analysed. The model is composed of two unidirectionally
coupled, non-identical H\'{e}non systems \cite{Schiff1996}. The equations of
the driver $X$ are%
\begin{equation}
\left\{ 
\begin{array}{l}
x_{t+1}^{(1)}=1.4-(x_{t}^{(1)}{})^{2}+0.1x_{t}^{(2)} \\ 
x_{t+1}^{(2)}=x_{t}^{(1)}%
\end{array}%
\right.   \label{HenonMapX}
\end{equation}%
and the equations of the responder $Y$ are 
\begin{equation}
\left\{ 
\begin{array}{l}
y_{t+1}^{(1)}=1.4-[Cx_{t}^{(1)}+(1-C)y_{t}^{(1)}{}]y_{t}^{(1)}+0.3y_{t}^{(2)}
\\ 
y_{t+1}^{(2)}=y_{t}^{(1)}%
\end{array}%
\right.   \label{HenonMap2}
\end{equation}%
where $C>0$ is the \textit{coupling constant} or \textit{strength}. The
reason for selecting this model here again is two-fold: (i) it has \textit{%
generalized synchronization} \cite{Rulkov1995,Rosenblum1997,Pikovsky2001} in
the parametric interval $0.50\lesssim C\lesssim 0.60$ and for $C\gtrsim 0.90$
(Figure 3 of Amig\'{o} et al. \cite{Amigo2024}), and (ii) we can compare
some of our results here with the results obtained there by other means and cross-interpret each result with the help of the others. We will refer to the generalized synchronization of the system (\ref{HenonMapX})-(\ref{HenonMap2}) for $0.50\lesssim C\lesssim 0.60$ and $C\gtrsim 0.90$
as \textquotedblleft weak\textquotedblright\ and \textquotedblleft
strong\textquotedblright\ synchronization, respectively.

For a given coupling constant $C$, let $\mathrm{x}=(x_{t}^{(1)})_{1\leq
t\leq N}$ and $\mathrm{y}=(y_{t}^{(1)})_{1\leq t\leq N}$ be two stationary
time series of length $N=10,000$ composed of the first components of the
states $x_{t}=(x_{t}^{(1)},x_{t}^{(2)})$ of the driver and $%
y_{t}=(y_{t}^{(1)},y_{t}^{(2)})$ of the responder, respectively, and
obtained with seeds $(0,0.9)$ and $(0.75,0)$ after discarding an initial
transient. Let $\mathrm{\alpha }=(\mathbf{r}_{t})_{1\leq t\leq N-L+1}$ and $%
\mathrm{\beta }=(\mathbf{s}_{t})_{1\leq t\leq N-L+1}$ be the algebraic
representations of $\mathrm{x}$ and $\mathrm{y}$ using ordinal patterns of
length $L=3$, and $\mathrm{T}_{\mathrm{\alpha },\mathrm{\beta }}=(\mathbf{t}%
_{t})_{1\leq t\leq N-L+1}=(\mathbf{s}_{t}\mathbf{\ast r}_{t}^{-1})_{1\leq
t\leq N-L+1}$ the corresponding transcript time series. The values chosen
for the coupling strength are $0\leq C\leq 1.2$ with $\Delta C=0.05$. For
computer codes to calculate ordinal patterns and tools, see Unakafova and
Keller \cite{Unakafova2013}, Berger et al. \cite{Berger2019}, and Pessa and
Ribeiro \cite{Pessa2021}.

%%%%%%%%%%%%%%%%%%%%%%%%%%%%%%%%%%%%%%

\subsection{Results with the entropy-complexity plane}

\label{sec71}

\begin{figure*}[tbh]
\begin{center}
\includegraphics[width=140mm]{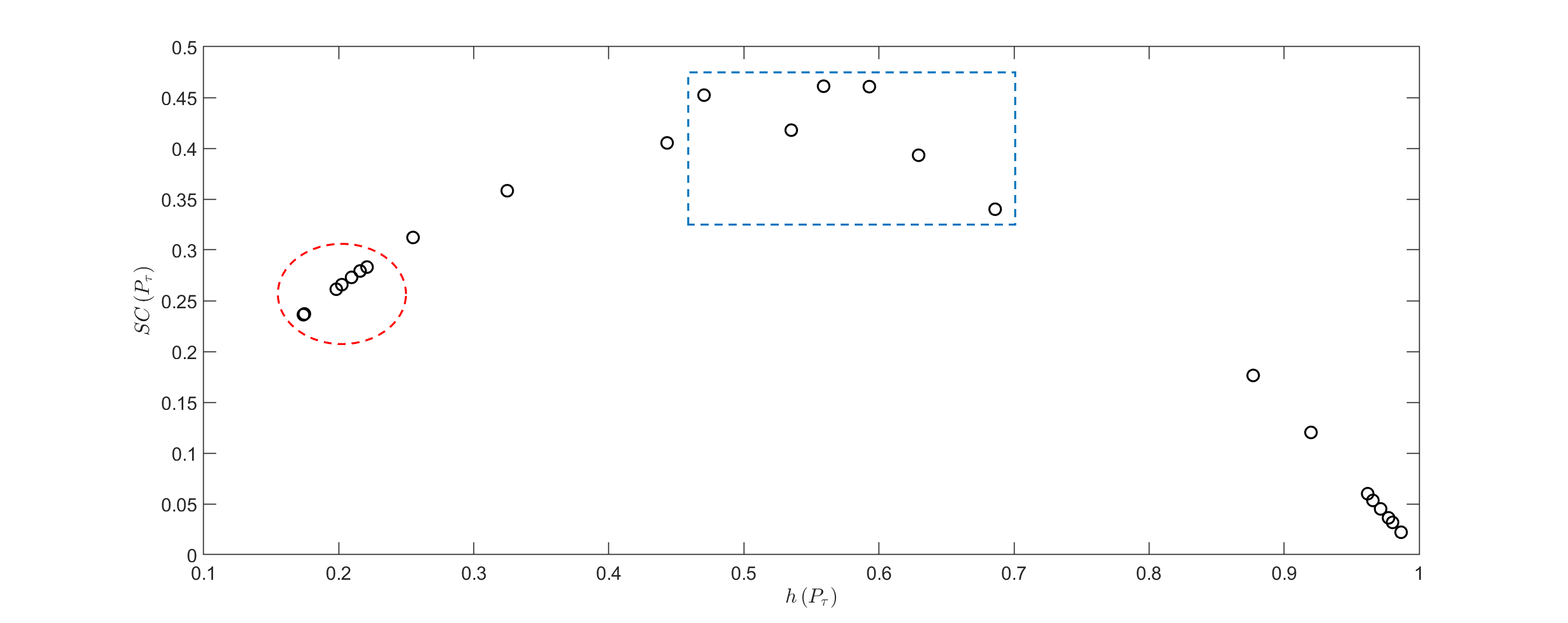}
\end{center}
\caption{Entropy-complexity plane $h(P_{\protect\tau })\times \mathrm{SC}(P_{%
\protect\tau })$ for the H\'{e}non-H\'{e}non system (\protect\ref{HenonMapX}%
)-(\protect\ref{HenonMap2}). Each point on the plane corresponds to a time
series of length-3 transcripts, obtained with a given coupling strength. All
points corresponding to strong synchronization ($C\gtrsim 0.9$) cluster
inside the dashed circle in the lower left of the plane. Points
corresponding to weak synchronization ($0.50\lesssim C\lesssim 0.60$) \ and
nearly weak synchronization ($C=0.40,0.45,0.65$) are distributed inside the
dashed square in the upper center of the plane.}
\label{figure2}
\end{figure*}

Let $P_{\tau }$ be the transcript probability distribution obtained from the
time series $\mathrm{T}_{\mathrm{\alpha },\mathrm{\beta }}$ with coupling
constant $C$. Figure \ref{figure2} shows the values of the normalized
Shannon entropy $h(P_{\tau })$ (equation (\ref{normalized H}) with $\Lambda
=0$ and $\left\vert \mathcal{G}\right\vert =\left\vert \mathrm{Sym}%
(3)\right\vert =6$) and the statistical complexity 
\begin{equation}
\mathrm{SC}(P_{\tau })=D_{JS}(P_{\tau }\parallel U)h(P_{\tau })
\label{SC Results}
\end{equation}%
(equation (\ref{SC_JensenShannon})) on the entropy-complexity plane \cite%
{Rosso2007} for $C=0.05k$ and $0\leq k\leq 24$. Here $U$ is the uniform
distribution over $6$ events. We use base 2 logarithms so that $0\leq 
\mathrm{SC}(P_{\tau })\leq 1$.

The dashed circle in the lower left of Figure \ref{figure2} encloses the
coupled systems with (left to right) $C=0.95$, $1.00$, $0.85$, $0.90$, $1.05$%
, $1.10$, $1.15$, and $1.20$ (where $C=0.85,0.90$ overlap, as well as $%
C=1.15,1.20$), i.e., all strongly synchronized systems ($C\geq 0.90$) and
one nearly strongly synchronized system ($C=0.85$).

On the other hand, the dashed square in the upper center of Figure \ref%
{figure2}\textbf{\ }encompasses the coupled systems with coupling constants
(left to right) $C=0.55$, $0.65$, $0.45$, $0.40$, $0.50$ and $0.60$, i.e.,
all weakly synchronized systems ($0.50\leq C\leq 0.60$) and three nearly
weakly synchronized system ($C=0.40$, $0.45$, $0.65$).

Points located in the lower right corner of the entropy-complexity plane
correspond to systems with small coupling strengths.

The main conclusions from Figure \ref{figure2} can be summarized as follows.

\begin{itemize}
\item All transcript time series generated by strongly synchronized systems
(left cluster) have a lower complexity than the transcript time series
generated by weakly and nearly weakly synchronized systems (upper cluster).
The lowest statistical complexities (lower right corner) correspond to weakly coupled systems.

\item Also, transcript time series corresponding to weak synchronization
have a greater entropy than those corresponding to strong synchonization.
The transcript time series with the highest entropies correspond to weakly
coupled systems.
\end{itemize}

Therefore, the entropy-complexity plane can discriminate strong from weak
synchronization in the drive-response model (\ref{HenonMapX})-(\ref{HenonMap2}%
) parameterized by the coupling constant $C$. The systems are roughly
located along a parabola, the left and right branches corresponding to
systems with strong and weak couplings, respectively. Weakly synchronized
and nearly weakly synchronized systems are located around the top of the
parabola. However, while the points corresponding to strong synchronization are closely grouped, the points corresponding to weak synchronization are more scattered and mixed with points corresponding to nearly, but not strictly belonging to, weak synchronization. Therefore, the entropy-complexity plane is an accurate clustering tool for strong synchronization but not for weak synchronization. The joint asymptotic probability distribution of entropy and complexity has been studied by Silbernagel and Wei\ss\ \cite{Silbernagel2025}.

%%%%%%%%%%%%%%%%%%%%%%%%%%%%%%%%%%%%%%%%%%%%%%

\subsection{Results with transcript mutual information}
\label{sec73}

\begin{figure*}[tbh]
\begin{center}
\includegraphics[width=140mm]{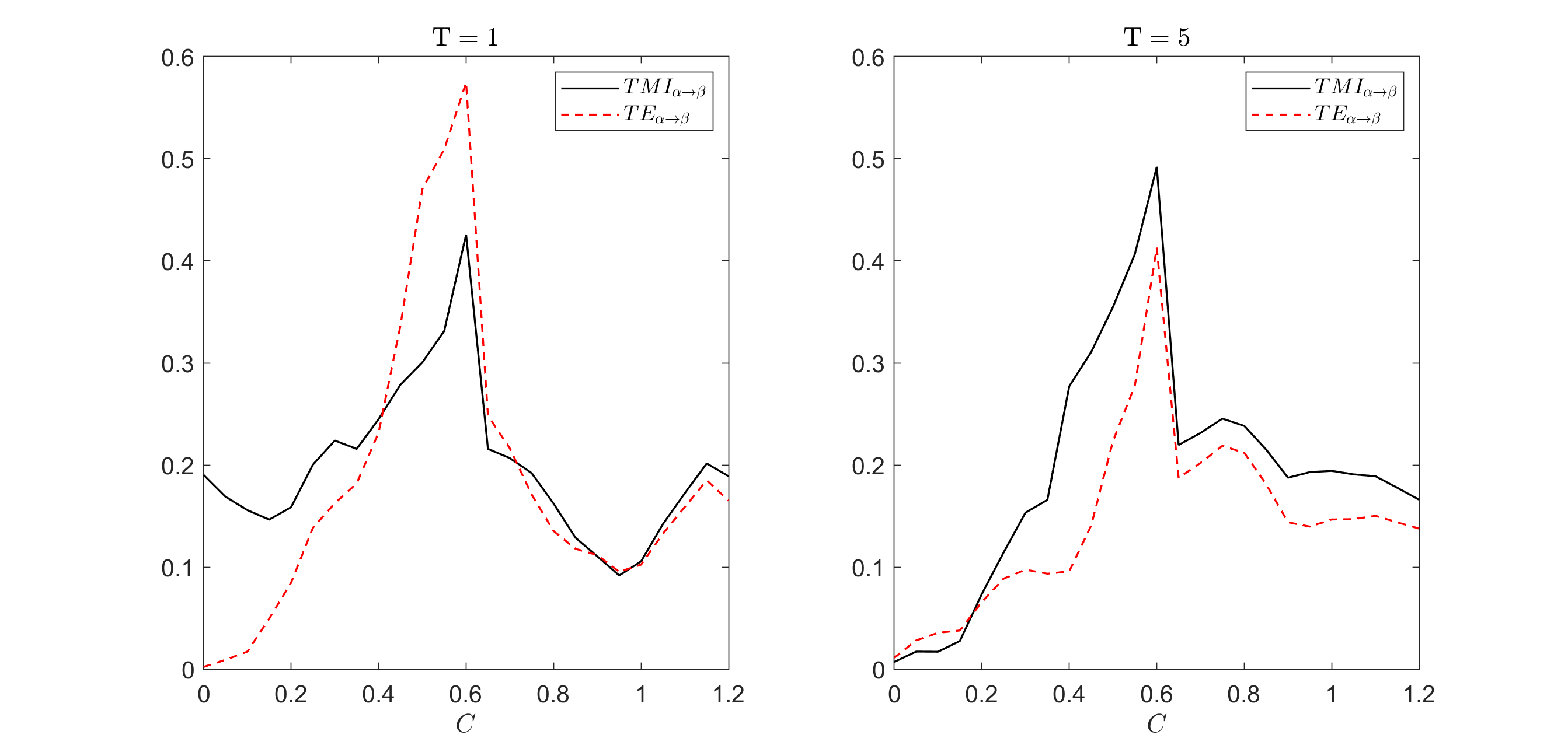}
\end{center}
\caption{Symbolic transfer entropy $\mathrm{TE}_{\mathrm{\protect\alpha }%
\rightarrow \mathrm{\protect\beta }}(1)=I(\mathbf{s}_{t+1};\mathbf{r}%
_{t}\left\vert \mathbf{s}_{t}\right) =:\mathrm{TE}_{\mathrm{\protect\alpha }%
\rightarrow \mathrm{\protect\beta }}$ and transcript mutual information $%
\mathrm{TMI}_{\mathrm{\protect\alpha }\rightarrow \mathrm{\protect\beta }%
}(1)=(T_{\mathbf{s}_{t+1},\mathbf{s}_{t}};T_{\mathbf{r}_{t},\mathbf{s}%
_{t}})=:\mathrm{TMI}_{\mathrm{\protect\alpha }\rightarrow \mathrm{\protect%
\beta }}$ as functions of the coupling strength $C$ for time lags $\textrm{T}=1$ (left panel) and $\textrm{T}=5$ (right panel). The positivity of the two
directionality indicators for the coupling delay $\Lambda =1$ in both panels suggests that
the process $\mathrm{\protect\alpha }$ is driving the process $\mathrm{%
\protect\beta }$. Also in both panels, the most visible feature of each curve is its global
maximum at weak synchronization ($C\simeq 0.60$), while strong
synchronization is not particularly signalized.}
\label{figure4}
\end{figure*}

Given the two coupled systems $X$ and $Y$ in equations (\ref{HenonMapX})-(%
\ref{HenonMap2}), and the observations $\mathrm{x}=(x_{t}^{(1)})_{1\leq
t\leq N}$ and $\mathrm{y}=(y_{t}^{(1)})_{1\leq t\leq N}$, the symbolic
transfer entropy \cite{Staniek2008} 
\begin{equation}
\mathrm{TE}_{\mathrm{\alpha }\rightarrow \mathrm{\beta }}(\Lambda )=I(%
\mathbf{s}_{t+\Lambda };\mathbf{r}_{t}\left\vert \mathbf{s}_{t}\right)
\label{TE L=3}
\end{equation}%
is a measure of the information transferred from $X$ to $Y$ based on the
ordinal representations $\mathrm{\alpha }=(\mathbf{r}_{t})_{1\leq t\leq
N-L+1}$ and $\mathrm{\beta }=(\mathbf{s}_{t})_{1\leq t\leq N-L+1}$ of $%
\mathrm{x}$ and $\mathrm{y}$, where here $\mathbf{r}_{t},\mathbf{s}_{t}\in 
\mathrm{Sym}(3)$.

As explained in Section \ref{sec54}, the transcript mutual information
\begin{equation}
\mathrm{TMI}_{\mathrm{\alpha }\rightarrow \mathrm{\beta }}(\Lambda )=I(T_{%
\mathbf{s}_{t+\Lambda },\mathbf{s}_{t}};T_{\mathbf{r}_{t},\mathbf{s}_{t}})
\label{TMI L=3}
\end{equation}%
coincides with $\mathrm{TE}_{\mathrm{\alpha }\rightarrow \mathrm{\beta }%
}(\Lambda )$ when two conditions are met: (i) $H(\mathbf{s}_{t})\leq H(\mathbf{r}%
_{t})$ and (ii) $C($\textrm{$\beta $}$_{\Lambda },\mathrm{\beta },\mathrm{\alpha })=0$, where $C(\cdot )$ is the coupling complexity coefficient of dimension
3 defined in equation (\ref{CCC}). Although condition (ii) is often only approximately met, numerical simulations and analysis with real data indicate that, in general, $C($\textrm{$\beta $}$_{\Lambda },\mathrm{\beta },\mathrm{\alpha }) \to 0$ and, hence, $\mathrm{TMI}_{\mathrm{\alpha }\rightarrow \mathrm{\beta }}(\Lambda ) \to \mathrm{TE}_{\mathrm{\alpha }\rightarrow \mathrm{\beta }}(\Lambda )$ as the time lag $\textrm{T}$ increases.

On the other hand, the possibility of calculating or
estimating a 3-dimensional information directionality indicator (symbolic
transfer entropy) with a 2-dimensional quantity (transcript mutual
information) is a very useful result in practice, where time series use to
be short and probabilities have to be estimated by frequencies. Considering
that, when determining the information directionality, we are usually only interested
in the sign of $\mathrm{TE}_{\mathrm{\alpha }\rightarrow \mathrm{\beta }%
}(\Lambda )$, the approximation error $\varepsilon =\left\vert \mathrm{TE}_{%
\mathrm{\alpha }\rightarrow \mathrm{\beta }}(\Lambda )-\mathrm{TMI}_{\mathrm{%
\alpha }\rightarrow \mathrm{\beta }}(\Lambda )\right\vert $ is
inconsequential as long as $\mathrm{TE}_{\mathrm{\alpha }\rightarrow \mathrm{%
\beta }}(\Lambda )$ and $\mathrm{TMI}_{\mathrm{\alpha }\rightarrow \mathrm{%
\beta }}(\Lambda )$ have the same sign, which translates into the assumption 
$\varepsilon <\left\vert \mathrm{TE}_{\mathrm{\alpha }\rightarrow \mathrm{%
\beta }}(\Lambda )\right\vert $.

Figure \ref{figure4} shows $\mathrm{TE}_{\mathrm{\alpha }\rightarrow \mathrm{%
\beta }}:=\mathrm{TE}_{\mathrm{\alpha }\rightarrow \mathrm{\beta }}(1)$ and $%
\mathrm{TMI}_{\mathrm{\alpha }\rightarrow \mathrm{\beta }}:=\mathrm{TMI}_{%
\mathrm{\alpha }\rightarrow \mathrm{\beta }}(1)$ as functions of the
coupling strength $C$ for time lags $\textrm{T}=1$ (left panel) and $\textrm{T}=5$ (right panel). As expected, the approximation error $\varepsilon$ is lower for $\textrm{T}=5$, in particular, $\mathrm{TMI}_{\mathrm{%
\alpha }\rightarrow \mathrm{\beta }}(1)=0$ for $C=0$ in that case. The main conclusions from Figure \ref{figure4} can be summarized as follows.

\begin{itemize}
\item The signs of $\mathrm{TE}_{\mathrm{\alpha }\rightarrow \mathrm{\beta }%
} $ and $\mathrm{TMI}_{\mathrm{\alpha }\rightarrow \mathrm{\beta }}$
coincide and are positive for $C>0$, as they should. As shown in the right panel, the deviation of $\mathrm{TMI}_{\mathrm{\alpha }\rightarrow 
\mathrm{\beta }}$ from $\mathrm{TE}_{\mathrm{\alpha }\rightarrow \mathrm{%
\beta }}$, especially for $C\leq 0.60$, can be reduced by taking larger time
lags in the time series $\mathrm{x}$ and $\mathrm{y}$.

\item Again, we observe a kind of complementary behavior regarding the two
synchronization states. For weak synchronization, both directionality
indicators have an absolute maximum, while their magnitudes in the strong
synchronization state are comparable to other dynamics with small couplings.
\end{itemize}

We conclude that the performance of the transcript mutual information $%
\mathrm{TMI}_{\mathrm{\alpha }\rightarrow \mathrm{\beta }}$ is satisfactory
to detect the driver, but unsatisfactory when it comes to detecting
generalized synchronization.

%%%%%%%%%%%%%%%%%%%%%%%%%%%%%%%%%%%%%%%%%%%%

\subsection{Results with transcript order classes}
\label{sec72}

\begin{figure*}[tbh]
\begin{center}
\includegraphics[width=140mm]{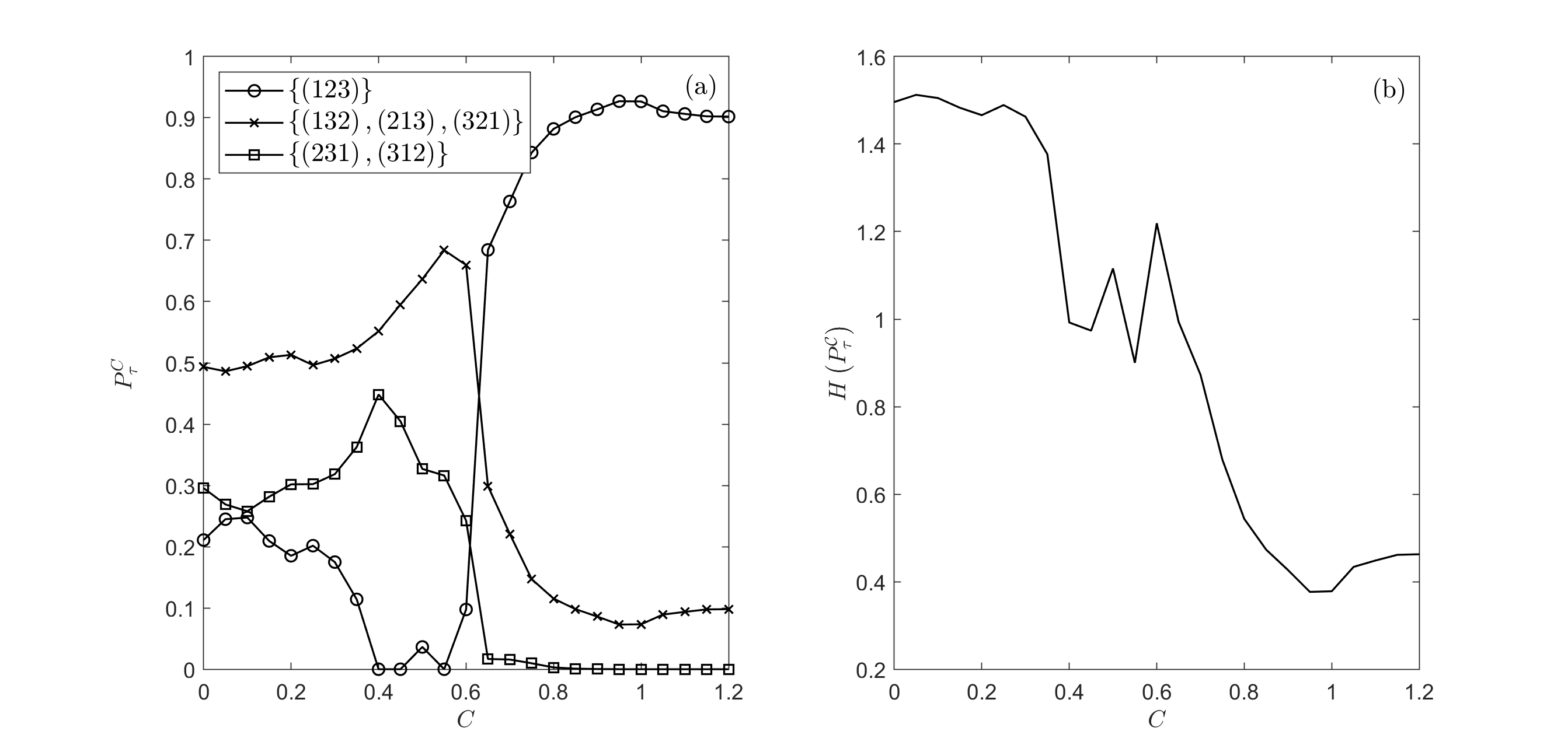}
\end{center}
\caption{(a) Probabilities of the transcript order classes (see inset) as
function of the coupling strength $C$. The probabilities of the lowest and
greatest order classes, $\mathcal{C}_{1}$ and $\mathcal{C}_{3}$, at the two
synchronization regimes are correlative in the following sense: $\mathcal{C}%
_{1}=\emptyset $ for weak synchronization while $\mathcal{C}_{3}=$ $%
\emptyset $ for strong synchronization. (b) Entropy of the order class
probability distribution $P_{\protect\tau }^{\mathcal{C}}$, equation (%
\protect\ref{p_C_m}). In this case weak synchronization is characterized by
a local minimum at $C=0.55$, while strong synchronization is characterized
by an absolute minimum at $C=1$.}
\label{figure3}
\end{figure*}

In Amig\'{o} and Dale \cite{Amigo2025}, we found a kind of correspondent
behavior of the forbidden distances with respect to the weak and strong
synchronizations regimes, namely: for $\mathbf{r},\mathbf{s}\in \mathrm{Sym}%
(L)$ and $L=4,5$, short distances ($d_{K}(\mathbf{r},\mathbf{s})=0,1,...$)
are forbidden in the weak regime ($C=0.55$), whereas long distances ($d_{K}(%
\mathbf{r},\mathbf{s})=d_{K,\max },d_{K,\max }-1,...$) are forbidden in the
strong regime ($C=1.10$). This correspondence seems to be reflected also in
Figure \ref{figure3}(a), this time with the probabilities of the transcript
order classes of the group $\mathrm{Sym}(3)$: 
\begin{equation}
\mathcal{C}_{1}=\{123\},\;\mathcal{C}_{2}=\{132,213,321\},\;\mathcal{C}%
_{3}=\{231,312\}.  \label{order classes L=3}
\end{equation}%
To be more specific:

\begin{itemize}
\item For $C=0.40,0.45,0.55$, the transcript $\mathbf{t}_{t}=123$ is
forbidden (that is, $\mathcal{C}_{1}=\{123\}=\emptyset $), which means that,
for weakly synchronized (and nearly synchronized) systems, $\mathbf{s}%
_{t}\neq \mathbf{r}_{t}$ for all $t\geq 0$, i.e., the ordinal $3$-patterns
of the driver and responder never coincide. For $C=0.50,0.60$, the
probability of $\mathbf{t}_{t}=123$ is not strictly $0$ but very small.

\item For $C\gtrsim 0.85$, $p_{\tau }(123)\gtrsim 0.9$, meaning that, for
strongly synchronized systems, the ordinal $3$-patterns of the driver and
responder coincide more than $90\%$ of the time. Furthermore, the transcript 
$321$ is forbidden in the same parametric interval (not shown), hence, if $%
\mathbf{r}_{t}=k_{1}k_{2}k_{3}$ then $\mathbf{s}_{t}\neq k_{3}k_{2}k_{1}$.

\item For $C\geq 0.95$, the order class $\mathcal{C}_{3}=\{231,312\}$ is
empty, which means that, in the regime of strong synchronization, $\mathbf{r}%
_{t}\in \mathcal{C}_{1}$ implies $\mathbf{s}_{t}\in \mathcal{C}_{1}\mathcal{%
\cup C}_{2}$, (ii) $\mathbf{r}_{t}\in \mathcal{C}_{2}$ implies $\mathbf{s}%
_{t}\in \mathcal{C}_{1}\mathcal{\cup C}_{3}$, and (iii) $\mathbf{r}_{t}\in 
\mathcal{C}_{3}$ implies $\mathbf{s}_{t}\in \mathcal{C}_{2}$.
\end{itemize}

In sum, weak and strong generalized synchronization impose restrictions on
the joint probabilities $p(\mathbf{r}_{t},\mathbf{s}_{t})$ that can be
detected at the coarse scale of transcript order classes of $\textrm{Sym}(3)$.

The entropy of the probability distribution of the transcript order classes
given in Figure \ref{figure3}(a) is depicted in Figure \ref{figure3}(b). As
in Figure \ref{figure2}, we see again that strong synchronization is
characterized by the global lowest transcript entropy, while transcript entropy has a local minimum at weak synchronization.

\begin{remark} \label{Remark order=distance}
  Let us also point out that there is a direct correspondence between
transcript order classes and Cayley distance when $\mathcal{G}=\mathrm{Sym}%
(3)$. According to equation (\ref{order classes L=3}) and the distance
matrix for $d_{C}(\mathbf{r},\mathbf{s})$ in equation (\ref{d_C(Sym(3))}), it holds%
\begin{equation}
d_{C}(\mathbf{r},\mathbf{s})=d_{C}(\mathbf{e},\,T_{\mathbf{r},\mathbf{s}%
})=n\in \{0,1,2\}\Leftrightarrow T_{\mathbf{r},\mathbf{s}}\in \mathcal{C}%
_{n+1},  \label{order = distance}
\end{equation}%
where $\mathbf{e}=123$, hence
\begin{equation}
\mathrm{ord}(T_{\mathbf{r},\mathbf{s}})-1 = d_{C}(\mathbf{r},\mathbf{s}).
\label{order = distance2}
\end{equation}
In other words, the integer $\mathrm{ord}(T_{\mathbf{r},\mathbf{s}})-1$ is actually a distance for the group Sym(3), 
namely, the Cayley distance between $\mathbf{r}$ and $\mathbf{s}$.  
\end{remark}
 For the
Kendall distance $d_{K}(\mathbf{r},\mathbf{s})$, the identification (\ref%
{order = distance}) is also correct except for $T_{\mathbf{r},\mathbf{s}%
}=321 $ (see equation (\ref{d_K(Sym(3))})). This explains the correlation
between forbidden distances and empty order classes mentioned above, which
is exact in the case of Cayley distances.

%%%%%%%%%%%%%%%%%%%%%%%%%%%%%%%%%%%%%%%%%%%%%%%%%%%

\subsection{Results with algebraic and entropic distances}
\label{sec74}

\begin{figure*}[tbh]
\begin{center}
\includegraphics[width=130mm]{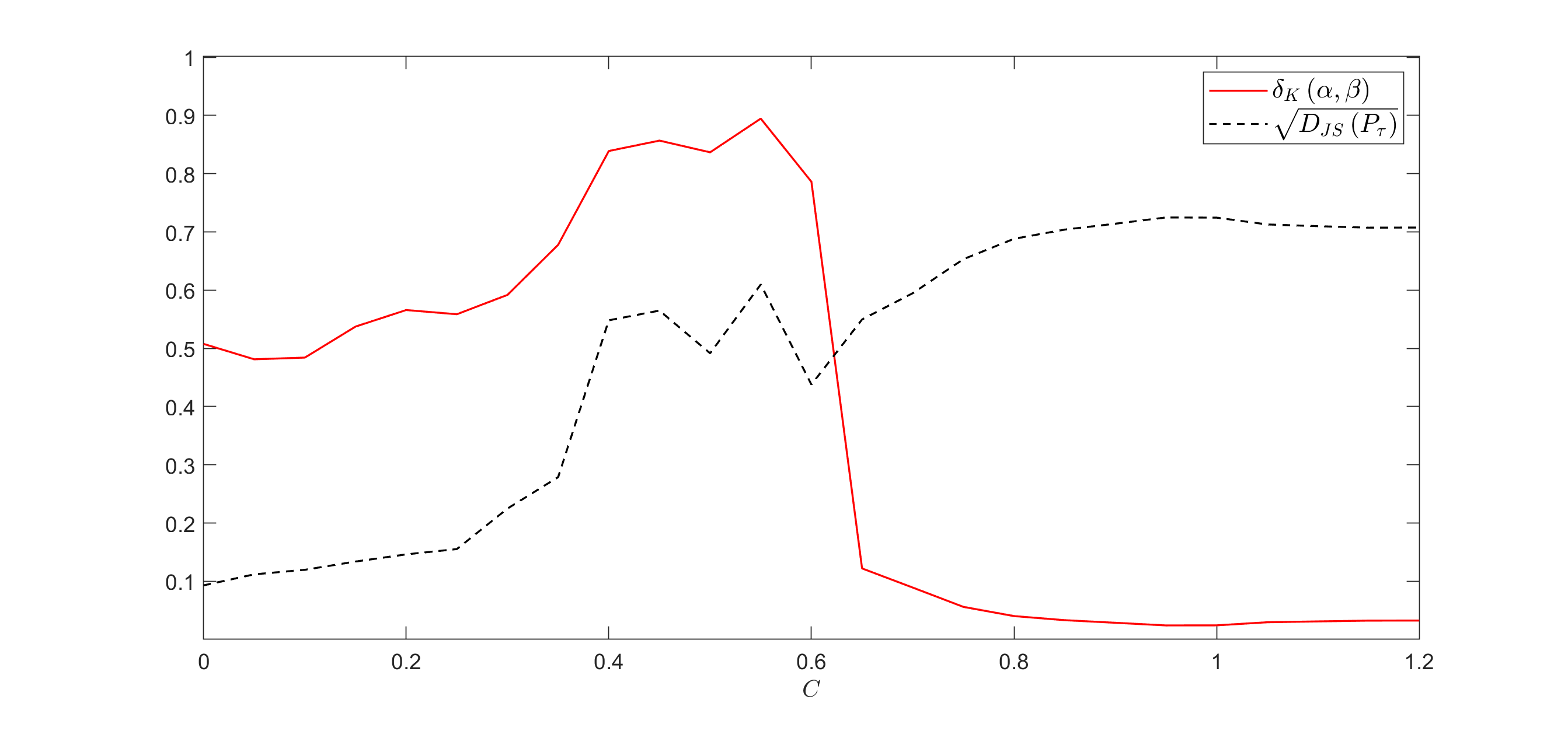}
\end{center}
\caption{The transcript-based similarity distance $\protect\delta _{K}(%
\mathrm{\protect\alpha },\mathrm{\protect\beta })$ and the entropy-based,
normalized distance $D_{JS}(P_{\mathbf{\protect\tau }})^{1/2}$ versus the
coupling strength $C$. The metric $\protect\delta (\mathrm{\protect\alpha },%
\mathrm{\protect\beta })$ measures the average Kendall distance $d_{K}(%
\mathbf{r}_{t},\mathbf{s}_{t})$ between simultaneous ordinal patterns of
the driver and responder; $D_{JS}(P_{\mathbf{\protect\tau }})^{1/2}$
measures the similarity between the transcript probability distributions $P_{%
\mathbf{\protect\tau }}$ and the uniform distribution $U$. In doing so, $%
\protect\delta _{K}(\mathrm{\protect\alpha },\mathrm{\protect\beta })$
achieves a global maximum at weak synchronization ($C\simeq 0.55$) and a
flat global minimum at strong synchronization ($C\gtrsim 1.0$), which is a
clear signal in both cases.}
\label{figure5}
\end{figure*}

Finally, we consider two distances between the symbolic time series $\mathrm{%
\alpha }=(\mathbf{r}_{t})_{1\leq t\leq N-L+1}$ and $\mathrm{\beta }=(\mathbf{%
s}_{t})_{1\leq t\leq N-L+1}$ for each coupling constant $0<C\leq 1.2$, $%
\Delta C=0.05$. First, we take the mean value over time of the sequence of
normalized Kendall distances $d_{K}(\mathbf{r}_{t},\mathbf{s}_{t})$, 
\begin{equation*}
(d_{K}(\mathbf{r}_{t},\mathbf{s}_{t})/d_{K,\max })_{1\leq t\leq
N-2}=(\left\Vert T_{\mathbf{r}_{t},\mathbf{s}_{t}}\right\Vert _{K}/3)_{1\leq
t\leq N-2}\text{,}
\end{equation*}%
to obtain the \textit{similarity distance}%
\begin{equation}
\delta _{K}(\mathrm{\alpha },\mathrm{\beta })=\frac{1}{3(N-2)}%
\sum_{t=1}^{N-2}\left\Vert T_{\mathbf{r}_{t},\mathbf{s}_{t}}\right\Vert _{K}
\label{d_K average}
\end{equation}%
between $\mathrm{\alpha }$ and $\mathrm{\beta }$ for each $C$. So, $\delta
_{K}(\mathrm{\alpha },\mathrm{\beta })$ measures similarity in the following
sense: the smaller (resp. greater) $\delta _{K}(\mathrm{\alpha },\mathrm{%
\beta })$ is, the more similar (resp. dissimilar) $\mathrm{\alpha }$ and $%
\mathrm{\beta }$ will be, meaning that the symbols of the time series $%
\mathrm{\alpha }$ and $\mathrm{\beta }$ will be closer (resp. more distant)
on average, where distance is measured by $d_{K}$.

To compare the performance of the transcript-based metric $\delta _{K}(%
\mathrm{\alpha },\mathrm{\beta })$ with other metrics, we have selected the
square root of the normalized Jensen-Shannon divergence, $D_{JS}(P_{\mathbf{%
\tau }})^{1/2}$, equation (\ref{SC Results}), which, according to Section %
\ref{sec52}, is a distance in axiomatic sense between the probability
distribution $P_{\mathbf{\tau }}$ and the uniform distribution $U$.
Therefore, the smaller (resp. greater) $D_{JS}(P_{\mathbf{\tau }})^{1/2}$
is, the more similar (resp. dissimilar) $P_{\mathbf{\tau }}$ and $U$ will
be. Since $C=0$ corresponds to the driver and responder being independent
(uncoupled), we expect that $D_{JS}(P_{\mathbf{\tau }})^{1/2}$ will have an
absolute minimum at $C=0$.

Figure \ref{figure5} depicts $\delta _{K}(\mathrm{\alpha },\mathrm{\beta })$
and $D_{JS}(P_{\mathbf{\tau }})^{1/2}$ as functions of $C$. The salient
points are the following.

\begin{itemize}
\item The algebraic similarity distance $\delta _{K}(\mathrm{\alpha },\mathrm{\beta })$
achieves its highest value (maximum dissimilarity between the time series $%
\mathrm{\alpha }$ and $\mathrm{\beta }$) at weak synchronization ($C=0.55$),
and its lowest values (maximum similarity between $\mathrm{\alpha }$ and $%
\mathrm{\beta }$) at strong synchronization. The transition is monotone and
steep.

\item On the other hand, the entropic Jensen-Shannon distance $D_{JS}(P_{\mathbf{\tau 
}})^{1/2}$ achieves a local maximum (i.e., a locally high dissimilarity
between the probability distributions $P_{\tau }$ and $U$) at weak
synchronization ($C=0.55$), and a flat global maximum (i.e., maximum
dissimilarity between $P_{\tau }$ and $U$) at strong synchronization.
\end{itemize}

The results for $\delta _{K}(\mathrm{\alpha },\mathrm{\beta })$ agree with
those obtained in Amig\'{o} and Dale \cite{Amigo2025} using empirical probability distributions for the Kendall distance and $L=4,5$, which
state that short distances are forbidden in the first case, while long
distances are forbidden in the second one. This disparity (also reflected in
the transcript order classes, Section \ref{sec72}) confirms that the nature
of the weak and strong synchronization regimes is different.

In sum, both the algebraic distance $\delta _{K}(\mathrm{\alpha },\mathrm{\beta })$ and the entropic distance $D_{JS}(P_{\mathbf{\tau }})^{1/2}$ are sensitive to the occurrence of synchronization, however, $\delta _{K}(\mathrm{\alpha },\mathrm{\beta })$ does so more clearly and distinctively. Actually, judging by the results obtained in the previous numerical simulations, $\delta _{K}(\mathrm{\alpha },\mathrm{\beta })$ is the best performer of all transcript-based tools considered in Section \ref{sec7}.  

%%%%%%%%%%%%%%%%%%%%%%%%%%%%%%%%%%%%%%%%%%%%%%%%%%%%%

\section{Conclusions}

\label{sec8}

This study discusses the role of transcripts in algebraic representations of time series, introducing a novel transcript-based tool for the analysis of coupled time series in the same representation. Remember that the transcript $T(a,b)$ from an element $a$ of a finite group $(\mathcal{G},\cdot )$ to another group element $b$, was defined in Section \ref{sec4} as $T(a,b)=b\cdot a^{-1}=:T_{a,b}$.

Regarding the objective (1) in the Introduction (overview of transcripts and their applications), we set the necessary framework in Sections (\ref{sec2})-(\ref%
{sec4}) and revisited a selection of transcript-based tools in Section \ref%
{sec5}, along with a representative sample of published applications. In
particular, we reminded the relationship between transcripts and the Cayley
and Kendall distances:\ $d_{C,K}(a,b)=\left\Vert T_{a,b}\right\Vert _{C,K}$
if $\mathcal{G}=(\mathrm{Sym}(L),\ast )$, equation (\ref{norm2}). In Section %
\ref{sec62} we showed that the distances $d_{C,K}$ can be \textquotedblleft
transported\textquotedblright\ to distances $D_{C,K}^{(\Phi )}$ on any other
group $\mathcal{G}$ via Cayley's isomorphism $\Phi :\mathcal{G}\rightarrow 
\mathrm{Sym}(\mathcal{G})$ (Theorem \ref{ThmCayley}). Furthermore, Cayley's
isomorphism can be implemented using transcripts (Section \ref{sec4}).

When using Cayley's isomorphism and equation (\ref{dist(a,b)}) to define the
distance $D_{C,K}^{(\Phi )}$ in a group $\mathcal{G}\neq \mathrm{Sym}(L)$,
we are encoding the $\left\vert \mathcal{G}\right\vert $ elements of $%
\mathcal{G}$ as permutations on $\mathcal{G}$. 
According to Cicirello \cite%
{Cicirello2019}, there are efficient algorithms (such as the bubble-sort
algorithm) that compute $D_{C}^{(\Phi )}$ in time $O(\left\vert \mathcal{G}%
\right\vert )$, and $D_{K}^{(\Phi )}$ in time $O(\left\vert \mathcal{G}%
\right\vert \log \left\vert \mathcal{G}\right\vert )$. Therefore, the
embedding of $\mathcal{G}$ into $\mathrm{Sym}(\mathcal{G})$ to endow $%
\mathcal{G}$ with the distance $D_{C,K}^{(\Phi )}$ is a computationally
competitive method for the low cardinality alphabets used in practice. For
example, the calculation of $(D_{K}^{(\Phi
)}(a_{t},b_{t}))_{1\leq t\leq N}$ when $\mathcal{G=}\mathrm{Sym}(6)$ ($%
\left\vert \mathcal{G}\right\vert =6!=720$), $N=10,000$, and $\Phi (\mathbf{r%
})$ is the right translation $\mathbf{s}\mapsto \mathbf{r}\circ \mathbf{s}=%
\mathbf{s}\ast \mathbf{r}$, took $2.84$ seconds using a non-parallelized
algorithm running on a laptop computer \cite{Amigo2025}.

Regarding the objective (2) (benchmarking with numerical simulations), we chose
four traditional transcript-based tools (entropy-complexity plane,
transcript mutual information, order classes, and Jensen-Shannon distance),
as well as the Kendall distance, which has a higher discriminatory power
than the Cayley distance due to its larger range (except for $\mathrm{%
Sym}(2)$); see equations (\ref{Range d_C})-(\ref{Range d_K}). The numerical simulations
consisted of detecting generalized synchronization in the drive-response
system (\ref{HenonMapX})-(\ref{HenonMap2}) composed of two, non-identical H%
\'{e}non maps. The best performer turned out to be the novel similarity distance $%
\delta _{K}(\mathrm{\alpha },\mathrm{\beta })$, which is the mean Kendall
distance defined in equation (\ref{d_K average}).

Precisely, the objective (3) in the Introduction refers to the similarity distance. First of all, we remark its simplicity. As suggested in Section \ref{sec74}, its excellent performance in the numerical
simulations is related to the existence of forbidden transcripts (i.e., zero
probability transcripts) in the cases of weak and strong synchronization. These results support the idea that the novel similarity distance will also perform well in different contexts, confirming its utility in time series analysis. 

In retrospect, the main contributions in the previous sections can be summarized as follows.

\begin{description}

\item[(i)] Introduction of the novel similarity distance (\ref{d_K average}). We found that this transcript-based tool outperforms the others tested in Section \ref{sec7}.

\item[(ii)] Derivation of the right-invariance of $d_{C,K}$ using
transcripts (equations (\ref{T=T})-(\ref{right invariance})). In turn, the
right-invariance of $d_{C,K}$ amounts to the Cayley and Kendall distances
being norms of transcripts (equation (\ref{norm2})).

\item[(iii)] Transportation of the algebraic distances $d_{C,K}$ from $%
\mathrm{Sym}(L)$ to a general group $(\mathcal{G},\cdot )$ via Cayley's
embedding. This approach is practical for the small cardinalities $%
\left\vert \mathcal{G}\right\vert $ used in practice. Moreover, calculating
with $D_{C,K}^{(\Phi )}$ dispenses with the search for minimal descriptions
of group elements as products of generators.

\item[(iv)] Verification that the algebraic order in $\mathrm{Sym}(L)$,
sometimes used as a \textquotedblleft dissimilarity\textquotedblright\
measure\ in the literature \cite{Monetti2009}, is actually a distance in
strict sense after subtracting one for $L=3$ (since it coincides with the
Cayley distance); see equation (\ref{order = distance2}).

\item[(v)] Interpretation of the similarity distance's performance in the
numerical simulations ($\mathcal{G}=\mathrm{Sym}(3)$) in terms of forbidden
transcripts when the system (\ref{HenonMapX})-(\ref{HenonMap2}) is in weak
or strong generalized synchronization.

\item[(vi)] Existence of gaps in the transported distances $D_{C,K}^{(\Phi )}
$. These gaps must be identified to avoid misinterpreting them as a
characteristic of the underlying dynamics. This can be done by calculating
the row $(D_{C,K}^{(\Phi )}(e,c))_{c\in \mathcal{G}}$ of the distance
matrix, or using independent white noises for the source and target time
series.
\end{description}

Of the above contributions, which include both theoretical and practical matters, we may conclude that transcripts are useful tools in time series analysis. In particular, the similarity distance results in our numerical simulations are very promising.    

The concept of transcript, as defined in equation (\ref{T(a,b)}), can be
generalized in different ways while preserving its instrumental properties
but, so far, no such generalizations have proven to be as handy and
practical as the original concept. This suggests that future research in the
theory and practice of algebraic representations should focus on the
development of new tools and applications. Actually, the applications of
algebraic distances in the analysis of time series is the topic of current
research. In this regard, we hope that this paper will pave the way for
new algebra-based tools and applications in
group-valued time series analysis.

%%%%%%%%%%%%%%%%%%%%%%%%%%%%%%%%%%%%%%%%%

\begin{acknowledgments}
We are very grateful to our reviewers for their constructive criticism, which contributed decisively to improving the original manuscript.
\end{acknowledgments}

%%%%%%%%%%%%%%%%%%%%%%%%%%%%%%%%%%%%%%%%%%%%%%%%%%%%%%%%%%%%%%%%%%%%%%%%%%%%

\section*{Data Availability Statement}
The data that support the findings of this study are available from the corresponding author upon reasonable request.

%\appendix

%\section{Appendixes}

%\nocite{*}
%\bibliography{aipsamp}% Produces the bibliography via BibTeX.

\end{document}